\def\ff{{\mathcal F}}
\def\ffi{\varphi}
\def\dst{\displaystyle}
\def\supp{{\mathrm{supp}\,}}
\def\C{{\mathbb{C}}}
\def\P{{\mathbb{P}}}
\def\R{{\mathbb{R}}}
\def\S{{\mathbb{S}}}
\def\Z{{\mathbb{Z}}}
\newcommand{\norm}[1]{{\left\|{#1}\right\|}}
\newcommand{\ent}[1]{{\left[{#1}\right]}}
\newcommand{\abs}[1]{{\left|{#1}\right|}}
\newcommand{\scal}[1]{{\left\langle{#1}\right\rangle}}
\newcommand{\set}[1]{{\left\{{#1}\right\}}}
\newenvironment{remark}[1][]{\vskip1pt\noindent\rm\textit{Remark #1}\,:\ }{\rm\vskip1pt}
\newenvironment{definition}[1][]{\vskip3pt\noindent\sl\textbf{Definition.}\ }{\rm\vskip3pt}
\newtheorem{problem}{Problem}
\newtheorem{lemma}{Lemma}[section]
\newtheorem{proposition}[lemma]{Proposition}
\newtheorem{theorem}[lemma]{Theorem}
\newtheorem{corollary}[lemma]{Corollary}
\newtheorem{remarknum}[lemma]{Remark}
\begin{document}

\title{On uncertainty principles in the finite dimensional setting}
\author{Saifallah Ghobber}

\address{S.G. D\'epartement Math\'ematiques\\
Facult\'e des Sciences de Tunis\\
Universit\'e de Tunis El Manar\\
Campus Universitaire\\ 1060 Tunis\\
Tunisie}
\email{Saifallah.Ghobber@math.cnrs.fr}

\author{Philippe Jaming}

\address{P.J. and S.G Universit\'e d'Orl\'eans\\
Facult\'e des Sciences\\ 
MAPMO - F\'ed\'eration Denis Poisson\\ BP 6759\\ F 45067 Orl\'eans Cedex 2\\
France}

\address{P.J. (current address)\,: Institut de Math\'ematiques de Bordeaux UMR 5251,
Universit\'e Bordeaux 1, cours de la Lib\'eration, F 33405 Talence cedex, France}
\email{Philippe.Jaming@gmail.com}

\begin{abstract}
The aim of this paper is to prove an uncertainty principle for the representation of a vector in two bases. Our result extends previously known ``qualitative'' uncertainty principles into more quantitative estimates.
We then show how to transfer such results to the discrete version of the Short Time Fourier Transform.
\end{abstract}

\subjclass{42A68;42C20}

\keywords{Fourier transform, short-time Fourier transform, uncertainty principle, annihilating pairs}

\maketitle


\section{Introduction}

The aim of this paper is to deal with uncertainty principles in finite dimensional settings. Usually, an uncertainty principle says that a function
and its Fourier transform can not be both well concentrated.
Of course, one needs to give a precise meaning to ``well concentrated'' and
we refer to \cite{HJ,FS} for numerous versions of the uncertainty principle
for the Fourier transform in various settings. Our aim here is to present results of that flavour for unitary operators on $\C^d$ and then to apply those results to the discrete short-time Fourier transform.
Let us now be more precise and describe our main results and the relations with existing literature.

\subsection{Main results}
Before presenting our results, we need some further notation. Let $d$ be an integer and $\ell^2_d$ be $\C^d$ equipped with its standard norm denoted $\norm{a}_{\ell^2}$ or simply $\norm{a}_2$
and the associated scalar product $\scal{\cdot,\cdot}$. More generally, for
$0< p<+\infty$, the $\ell^p$-``norm'' is defined by
$\norm{a}_{\ell^p}=\left(\sum_{j=0}^{d-1}|a_j|^p\right)^{1/p}$.
For a set $E\subset\{0,\ldots,d-1\}$ we will write $E^c$ for its complement, $|E|$ for the number of its elements. Further, for $a=(a_0,\ldots,a_{d-1})\in\ell^2_d$, we denote
$\norm{a}_{\ell^2(E)}=\left(\sum_{j\in E}|a_j|^2\right)^{\frac{1}{2}}$.  Finally, the support of $a$
is defined as $\mbox{supp}\,a=\{j\,:\ a_j\not=0\}$ and we set $\norm{a}_{\ell^0}=|\mbox{supp}\,a|$.

Our aim here is to deal with finite dimensional analogues of the uncertainty principle. Here, instead of the Fourier transform, we will consider general unitary operators, ({\it i.e.} a change of coordinates 
from one orthonormal bases to another one), and concentration is measured in the following sense:

\begin{definition}\ \\
Let $\Phi=\{\Phi_j\}_{j=0,\ldots,d-1}$ and $\Psi=\{\Psi_j\}_{j=0,\ldots,d-1}$ be two orthonormal bases of $\ell^2_d$.
Let  $S,\Sigma\subset\{0,\ldots,d-1\}$.
Then $(S,\Sigma)$ is said to be a

--- \emph{weak annihilating pair} (for those bases) if
$\mathrm{supp}\,(\scal{a,\Phi_j})_{0\leq j\leq d-1}\subset S$ and
$\mathrm{supp}\,\scal{a,\Psi_j}\subset\Sigma$ implies that $a=0$;

--- \emph{strong annihilating pair} (for those bases) if there exists a constant
$C(S,\Sigma)$ such that for every $a\in\ell^2_d$
\begin{equation}
\label{eq:strongan}
\norm{a}_{\ell^2}\leq C(S,\Sigma)\bigl(\norm{\scal{a,\Phi_j}}_{\ell^2(S^c)}+
\norm{\scal{a,\Psi_j}}_{\ell^2(\Sigma^c)}\bigr).
\end{equation}
\end{definition}

Of course, any strong annihilating pair is also a weak one. This notion is an adaptation of a similar one
for the Fourier transform for which it has been extensively studied. We refer to \cite{HJ,FS} for more references.
The advantage of the second notion over the first one is that it states that
if the coordinates of $a$ in the basis $\Phi$ outside $S$ and those of $a$ in the basis $\Psi$ outside $\Sigma$ are small, then $a$ itself is small.


It follows from a standard compactness argument (that we reproduce after Formula \eqref{eq:strtriv} below) that, in a finite dimensional setting, both notions are equivalent. However, this argument
does not give any information on $C(S,\Sigma)$. It is our
aim here to modify an argument from \cite{DS} to obtain quantitative information on this constant in terms of $S$ and $\Sigma$. More precisely, we will prove the following Uncertainty Principles:

\medskip

\noindent{\bf Theorem A.}\ \\
{\sl Let $d$ be an integer. Let $\Phi=\{\Phi_j\}_{j=0,\ldots,d-1}$ and $\Psi=\{\Psi_j\}_{j=0,\ldots,d-1}$ be two orthonormal bases of $\C^d$ and define the \emph{coherence} of $\Phi,\Psi$ by
$M(\Phi,\Psi)=\dst\max_{0\leq j,k\leq d-1}|\scal{\Phi_j,\Psi_k}|$.
Let $S,\Sigma$ be two subsets of $\{0,\ldots,d-1\}$. Assume that $|S||\Sigma|<\dst\frac{1}{M(\Phi,\Psi)^2}$.
Then for every $a\in\C^d$,}
$$
\|a\|_2\leq\left(1+\frac{1}{1-M(\Phi,\Psi)(|S||\Sigma|)^{1/2}}\right)\left(
\norm{\scal{a,\Phi_j}}_{\ell^2(S^c)}+\norm{\scal{a,\Psi_j}}_{\ell^2(\Sigma^c)}\right).
$$

\medskip

As a first corollary of this result, we will show that a sequence may not be too compressive
in two different bases ({\it see} Corollary \ref{cor:comp} for a precise statement).
Further, we show in Proposition \ref{prop:BT} that, 
if $M(\Phi,\Psi)=d^{-1/2}$ (in which case, the bases are said to be \emph{unbiased}),
then any set $\Sigma$ that is not too large is a member of a strong annihilating pair. More precisely, if
$|\Sigma|\leq d-\sqrt{240d}$, there exists a set $S$ such that
$|S|\geq\dst\frac{(d-|\Sigma|)^2}{240d}$ and $(S,\Sigma)$ is a strong annihilating pair.

Let us stress that all results mentioned so far apply to the discrete Fourier transform $\ff_d$ which may be seen as the
unitary operator that changes the standard basis $\Delta=\{\delta_j\}_{j=0,\ldots,d-1}$ of $\ell^2_d$
into the \emph{Fourier basis} defined by $\Psi=\{\Psi_j\}_{j=0,\ldots,d-1}$ with 
$$
\Psi_j=\ff_d[\delta_j]=d^{-1/2}(1,\ldots,e^{2i\pi jk/d},\ldots,e^{2i\pi j(d-1)/d}).
$$
Note that these two bases are unbiased.

\medskip

Finally, we will apply our results to the discrete short-time Fourier transform. Let us describe these results in a 
slightly simplified setting. First, for $d$ an integer, we will consider elements of $\ell^2_d$ as $d$-periodic 
functions on $\{0,\ldots,d-1\}$. For $f,g\in\ell^2_d$, the short-time (or windowed) Fourier
transform of $f$ with window $g$ is then defined for $j,k\in\{0,\ldots,d-1\}$ by
$$
V_gf(j,k)=\frac{1}{\sqrt{d}}\sum_{\ell=0}^{d-1}f(\ell)\overline{g(\ell-j)}e^{2i\pi k\ell/d}.
$$
Note that, if we write $\tau_jg(\ell)=g(\ell-j)$, then $V_gf(j,k)=\ff_d[f\,\overline{\tau_jg}](k)$, so the 
windowed Fourier transform can be seen as the Fourier transform of $f$ seen through a sliding window $g$.
We refer to {\it e.g.} \cite{HCM,HS,KPR} for various applications of the discrete short-time Fourier transform in signal processing. Our aim is to show that this transform satisfies an uncertainty principle:

\medskip

\noindent{\bf Theorem B.}\ \\
{\sl Let $\Sigma$ be a subset of $\{0,\ldots,d-1\}^2$ with $|\Sigma|<d$ and $g\in\ell^2_d$ with $\norm{g}_2=1$. 
Then for every $f\in\ell^2_d$,}
$$
\norm{f}_2\leq\frac{2\sqrt{2}}{1-|\Sigma|/d}\left(\sum_{(j,k)\notin\Sigma}|V_gf(j,k)|^2\right)^{1/2}.
$$

\medskip

Following \cite{KPR}, the definition of the windowed Fourier transform will be extended to the setting of finite Abelian groups.
We will prove an analogue of the above theorem in that general setting. 

\subsection{Comparison with existing results}.

The two uncertainty principles given in Theorems A and B are quantitative
improvements of known results. 

\medskip

First, uncertainty principles for the discrete Fourier transform $\ff_d$ are known 
for some time. To our knowledge, the first occurrence of such a result is due to Matolcsi-Szucs\cite{MS}
and was rediscovered by Donoho-Stark \cite{DS}. More precisely, if one considers $\ff_d$
as the change of coordinate operator from the standard basis to the Fourier basis then
Theorem A reads as follows: if $|S||\Sigma|<d$ then
\begin{equation}
\label{eq:UP}
\|a\|_2\leq\left(1+\frac{1}{1-(|S||\Sigma|/d)^{1/2}}\right)\left(
\norm{a}_{\ell^2(S^c)}+\norm{\ff_d[a]}_{\ell^2(\Sigma^c)}\right).
\end{equation}
In particular, if $a$ is supported in $S$ and $\ff_d[a]$ is supported in $\Sigma$, then $a=0$,
which is the result proved in \cite{MS,DS}.

This result may also be seen as a discrete counterpart of an uncertainty principle for the continuous
Fourier transform on $L^2(\R^n)$ originally proved by F. Nazarov for $n=1$ \cite{Na} and
the second author for arbitrary dimension \cite{Janaz}. This was one of the motivations in writing this paper.

At this stage, we would also like to mention that T. Tao \cite{Ta} proved that, if the dimension $d$
is a prime number, and if $|S|+|\Sigma|\leq d$, than $(S,\Sigma)$ is an annihilating pair. Unfortunately,
Tao's proof does not give any information on the constant $C(S,\Sigma)$, and our method
of proof does not recover his result neither. For sake of completeness, we would like to
mention the work of Meshulam \cite{Me} and Delvaux-Van Barel \cite{DV1,DV2} that pursue Tao's work.

Further, Donoho-Huo \cite{DH} considered other particular pairs of bases. For the general case of two arbitrary
bases, Theorem \ref{th:th1intro} gives a quantitative version of a result of 
Elad-Bruckstein \cite{EB}. Note that Elad-Bruckstein's result was extended to more than two bases by 
Gribonval-Nielsen \cite{GN}. Note also that \cite{DH,EB,GN} further deal with the problem of recovering a
vector that is sparse ({\it i.e.} with small support) in one basis from knowledge of a small number of its coordinates in an other basis via $\ell^1$-minimization, an issue we do not tackle here. 

Next, if the sets $S,\Sigma$ are chosen randomly, then one may improve the result in Theorem A.
This was done for the discrete Fourier transform by Cand\`es-Tao and almost simultaneously by
Rudelson-Vershynin \cite{RV} who obtained a slightly better result that also applies to unbiased bases. As we will use it for the discrete short-time Fourier transform, we will reproduce the result in Theorem \ref{th:RV}.
Further results of probabilistic nature may be found in the work of Tropp \cite{Tr,Tr0}.

\medskip

Finally, the uncertainty principle for the short-time Fourier transform that we prove here is a quantitative strengthening  of the main result of Krahmer, Pfander, Rashkov \cite{KPR}. Its proof is an adaptation of
a method that was originally developed in \cite{Jam,Jan} and improved in \cite{De,GZ} in the continuous setting.
More precisely, we first prove that the discrete Fourier transform of the product of two short-time Fourier 
transforms is again a product of short-time Fourier transforms (Lemma \ref{lem:sym}). This allows us to prove
a transfer principle from strong annihilating pairs for the discrete 
Fourier transform into a similar result for its short-time version (Lemma \ref{lem:transfer}).
From this, we deduce Theorem B (in a more general version, Corollary \ref{cor:cor2intro})
as well as a ``probabilistic improvement'' (Corollary \ref{cor:RVamb}).

\subsection{Link with compressive sensing}
Although our results do not apply directly to the blooming subject of compressed sensing, this subject was one of the motivations of our research. Let us recall that the \emph{Uniform Uncertainty Principle} was introduced by E. Cand\`es and T. Tao in their seminal series of papers \cite{CT1,CT2,CRT,CR}.

\begin{definition}\ \\
Let $T\,:\ell^2_d\to\ell^2_d$ be a unitary operator. Let $s\leq d$ be an integer
and $\Omega\subset\{0,\ldots,d-1\}$.
Then $(T,\Omega,s)$ is said to have the \emph{Uniform Uncertainty Principle}
(also called the \emph{Restricted Isometry Property})
if there exists $\delta_s\in(0,1)$ such that, for every $S\subset\{0,\ldots,d-1\}$ with $|S|=s$ and for every $a\in\ell^2_d$
with $\mathrm{supp}\,a\subset S$
\begin{equation}
\label{eq:uup}
(1-\delta_s)\frac{|\Omega|}{d}\norm{a}_2^2\leq\norm{Ta}_{\ell^2(\Omega)}^2\leq
(1+\delta_s)\frac{|\Omega|}{d}\norm{a}_2^2
\end{equation}
We will call $\delta_s$ the \emph{Restricted Isometry Constant} of $(T,\Omega,s)$.
\end{definition}

The purpose of this property was to show that, one may recover $a$ from the knowledge of $P_\Omega Ta$
(where $P_\Omega$ is the projection onto the coordinates in $\Omega$), under the restriction of $a$ to be sufficiently \emph{sparse}, that is $|\mbox{supp}\,a|$ to be sufficiently small.
Moreover, if $\delta_{2s}$ is sufficiently small, $a$ may be reconstructed by an $\ell^1$-minimization
program ({\it see} the paper of  Cand\`es \cite{Ca} and Foucard-Lai \cite{FL} for the best results to date).
%
%
%
%

Let us now mention how the Uniform Uncertainty Principle (UUP) is linked to the
notion of annihilating pairs.
 If $(T,\Omega,s)$
has the UUP with constant $\delta_s$ then for every $S$ of cardinality $s$,
$(S,\Omega^c)$ is an annihilating pair for the standard basis $\Delta=\{\delta_j\}_{0\leq j\leq d-1}$
and the orthonormal basis $T\Delta=\{T\delta_j\}_{0\leq j\leq d-1}$. More precisely, 
a standard computation ({\it see} \eqref{eq:simple} where we reproduce the simple argument) shows that
$$
\norm{a}_2\leq\left(1+\sqrt{\frac{d}{(1-\delta_s)|\Omega|}}\right)\bigl(\norm{a}_{\ell^2(S^c)}
+\norm{Ta}_{\ell^2(\Omega)}\bigr).
$$
Conversely, assume that $\Sigma$ is such that, for every $S$
such that $|S|=s$, $(S,\Sigma)$ is a strong annihilating pair for $\Delta$ and $T\Delta$. Let $C(\Sigma)=\sup_{|S|=s}C(S,\Sigma)$, then $(T,\Sigma^c,s)$ satisfies the Uniform Uncertainty Principle
with $\dst\delta_s=1-\frac{1}{C(\Sigma)}\frac{1}{1-|\Sigma|/d}$.

\medskip

\subsection*{Outline of the paper}
This article is organized as follows: in the next section, we prove results about strong annihilating pairs for a change of basis. The following section deals with applications to the short-time Fourier transform. We devote the last section to a short conclusion.

\section{The Uncertainty Principle for expansions in two bases.}

\subsection{Further notations on Hilbert spaces}\ 

Let $\Phi=\{\Phi_j\}_{j=0,\ldots,d-1}$ be a basis of $\C^d$ that is \emph{normalized} {\it i.e.}
$\|\Phi_j\|_2=1$ for all $j$. If $a\in\C^d$, then we may write $a=\dst\sum_{i=0}^{d-1}a_i\Phi_i$.
We will denote by $\norm{a}_{\ell^p(\Phi)}=\norm{(a_0,\ldots,a_{d-1})}_{\ell^p}$
and $\mbox{supp}_\Phi\,a=\set{i\,: a_i\not=0}$. We also define 
$\norm{a}_{\ell^2(\Phi,E)}$ in the obvious way when $E$ is a subset of $\{0,\ldots,d-1\}$. When no confusion can arise, we simply write $\norm{a}_{\ell^2(E)}$. 

\medskip

Next, we will denote by $\Phi^*=\{\Phi_j^*\}_{j=0,\ldots,d-1}$ the dual basis\footnote{We would like to point the reader's attention to the notation adopted here and that is standard in linear algebra. The vectors $\Phi_j$ may be seen as the row (or column) vector of its coordinates in the standard basis. Then $\Phi_j^*$ may be seen as the transposed-conjugate of $\Phi_j$ if the basis $\Phi_j$ is orthonormal. In general, this is not the case.} of $\Phi$. More precisely,
$\Phi^*$ is the basis defined by
$\scal{\Phi_j,\Phi_k^*}=\delta_{j,k}$ where $\delta_{j,k}$ is the Kronecker symbol, $\delta_{j,k}=
\begin{cases}0&\mbox{if }j\not=k\\ 1&\mbox{if }j=k\end{cases}$.
Every $a\in\C^d$ can then be written as
$$
a=\sum_{j=0}^{d-1}\scal{a,\Phi_j^*}\Phi_j.
$$
Moreover, there exist two positive numbers $\alpha(\Phi)$ and $\beta(\Phi)$,
called the \emph{lower} and \emph{upper Riesz bounds} of $\Phi$ such that
\begin{equation}
\label{eq:frame}
\alpha(\Phi)\|a\|_2\leq\left(\sum_{j=0}^{d-1}|\scal{a,\Phi_j^*}|^2\right)^{1/2}\leq\beta(\Phi)\|a\|_2.
\end{equation}
Note that, if we take $a=\Phi_k$, we obtain $\alpha(\Phi)\leq 1 \leq\beta(\Phi)$. Moreover, $\alpha(\Phi)=\beta(\Phi)=1$ if and only if $\Phi$ is orthonormal.

If $\Phi$ and $\Psi$ are two normalized bases of $\C^d$, we will define their \emph{coherence} by
$$
M(\Phi,\Psi)=\max_{0\leq j,k\leq d-1}\abs{\scal{\Phi_j,\Psi_k}}.
$$
Obviously $M(\Phi,\Psi)\leq 1$ and, if $\Phi$ and $\Psi$ are orthonormal bases, then
$M(\Phi,\Psi)\geq\dst\frac{1}{\sqrt{d}}$.
If $M(\Phi,\Psi)=\frac{1}{\sqrt{d}}$, then $\Phi$ and $\Psi$ are said to be \emph{unbiased}.
A typical example of a pair of unbiased bases
is the standard basis and the Fourier basis of $\C^d$, {\it see} Section \ref{sec:abel}.

\medskip

Let us recall that the Hilbert-Schmidt norm of a linear operator is the $\ell^2_d$ norm
of its matrix in an orthonormal basis $\Phi$:
$$
\norm{U}_{HS}=\left(\sum_{i,j=0}^{d-1}|\scal{U\Phi_i,\Phi_j}|^2\right)^{\frac{1}{2}}.
$$
As is well known, this definition does not depend on the orthonormal basis and it controls the
norm of $U$ as a linear operator $\ell^2_d\to\ell^2_d$\,:
$$
\norm{U}_{\ell^2_d\to\ell^2_d}:=\max\limits_{a\in\C^d\,:\ \|a\|_2=1}\|Ua\|_2\leq\norm{U}_{HS}.
$$

\subsection{The strong version of Elad and Bruckstein's Uncertainty Principle.}\ 

Let us start by giving a simple proof of a result of Elad and Bruckstein \cite{EB}.

\begin{lemma}\label{lem:1}\ \\
Let $\Phi$ and $\Psi$ be two normalized bases of $\C^d$.
Then for every $a\in\C^d\setminus\{0\}$,
$$
\norm{a}_{\ell^0(\Phi)}\norm{a}_{\ell^0(\Psi)}\geq \frac{1}{\left(\min\left\{\frac{\beta(\Phi)}{\alpha(\Psi)}M(\Phi,\Psi^*),
\frac{\beta(\Psi)}{\alpha(\Phi)}M(\Phi^*,\Psi)\right\}\right)^2}.
$$
In particular,
$$
\norm{a}_{\ell^0(\Phi)}+\norm{a}_{\ell^0(\Psi)}\geq
\frac{2}{\min\left\{\frac{\beta(\Phi)}{\alpha(\Psi)}M(\Phi,\Psi^*),
\frac{\beta(\Psi)}{\alpha(\Phi)}M(\Phi^*,\Psi)\right\}}.
$$
\end{lemma}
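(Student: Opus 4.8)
The plan is to expand $a$ in the basis $\Phi$ and compare coordinates with those in the basis $\Psi$ via the change-of-basis matrix, whose entries are controlled by the coherence. Write $a=\sum_i a_i\Phi_i$ with $a_i=\scal{a,\Phi_i^*}$ and $a=\sum_j b_j\Psi_j$ with $b_j=\scal{a,\Psi_j^*}$. The key identity is that $b_j=\sum_{i\in\supp_\Phi a}a_i\scal{\Phi_i,\Psi_j^*}$, so each $\Psi$-coordinate of $a$ is a sum over at most $\norm{a}_{\ell^0(\Phi)}$ terms. Applying Cauchy--Schwarz to this sum and bounding $|\scal{\Phi_i,\Psi_j^*}|\le M(\Phi,\Psi^*)$ gives
$$
|b_j|^2\le \norm{a}_{\ell^0(\Phi)}\,M(\Phi,\Psi^*)^2\sum_{i}|a_i|^2.
$$

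Next I would sum this estimate only over $j\in\supp_\Psi a$ (the other $b_j$ vanish), obtaining
$$
\sum_{j}|b_j|^2\le \norm{a}_{\ell^0(\Phi)}\norm{a}_{\ell^0(\Psi)}\,M(\Phi,\Psi^*)^2\sum_i|a_i|^2.
$$
Now invoke the Riesz bounds \eqref{eq:frame}: the left side is $\ge \alpha(\Psi)^2\norm{a}_2^2$ and the factor $\sum_i|a_i|^2$ on the right is $\le\beta(\Phi)^2\norm{a}_2^2$. Since $a\neq0$ we may cancel $\norm{a}_2^2$ and rearrange to get $\norm{a}_{\ell^0(\Phi)}\norm{a}_{\ell^0(\Psi)}\ge \alpha(\Psi)^2/\bigl(\beta(\Phi)^2M(\Phi,\Psi^*)^2\bigr)$, i.e. the bound with $\frac{\beta(\Phi)}{\alpha(\Psi)}M(\Phi,\Psi^*)$ in the denominator. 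Running the same argument with the roles of $\Phi$ and $\Psi$ interchanged (expanding $a$ first in $\Psi$, estimating the $\Phi$-coordinates using $M(\Phi^*,\Psi)$) yields the bound with $\frac{\beta(\Psi)}{\alpha(\Phi)}M(\Phi^*,\Psi)$, and taking the better of the two gives the stated product inequality.

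The additive inequality then follows purely formally from the multiplicative one by the AM--GM inequality: if $xy\ge \kappa^{-2}$ with $x,y>0$, then $x+y\ge 2\sqrt{xy}\ge 2/\kappa$, applied with $\kappa$ equal to the relevant minimum. I do not expect any real obstacle here; the only points needing care are keeping straight which coherence ($M(\Phi,\Psi^*)$ versus $M(\Phi^*,\Psi)$) and which Riesz bounds ($\alpha,\beta$ of which basis) appear in each of the two symmetric estimates, and noting that when $\Phi,\Psi$ are orthonormal one has $\Phi^*=\Phi$, $\Psi^*=\Psi$, $\alpha=\beta=1$, so the bound collapses to the familiar $\norm{a}_{\ell^0(\Phi)}\norm{a}_{\ell^0(\Psi)}\ge M(\Phi,\Psi)^{-2}$.
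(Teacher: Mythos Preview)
Your proof is correct and follows essentially the same approach as the paper's: expand $a$ in $\Phi$, bound the $\Psi^*$-coefficients via the coherence and Cauchy--Schwarz (picking up the factor $\norm{a}_{\ell^0(\Phi)}$), invoke the Riesz bounds, then swap the roles of $\Phi$ and $\Psi$ and finish with AM--GM. The only cosmetic difference is that the paper closes the inequality chain by comparing $|\scal{a,\Psi_j^*}|$ to $\max_k|\scal{a,\Psi_k^*}|$ (via $\|b\|_{\ell^2}\le\norm{a}_{\ell^0(\Psi)}^{1/2}\|b\|_{\ell^\infty}$), whereas you sum $|b_j|^2$ over $\supp_\Psi a$ and cancel $\|a\|_2^2$ directly; the two routes are equivalent.
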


\begin{proof} As the arithmetic mean dominates the geometric mean, the second statement immediately
follows from the first one. The proof mimics the proof given in \cite{Ta} for the Fourier basis.
For $a\not=0$ and $j=0,\ldots, d-1$,
\begin{eqnarray*}
|\scal{a,\Psi_j^*}|&=&\abs{\sum_{k=0}^{d-1}\scal{a,\Phi_k^*}\scal{\Phi_k,\Psi_j^*}}
\leq\left(\max_{j,k=0,\ldots,d-1}\abs{\scal{\Phi_k,\Psi_j^*}}\right)\sum_{k=0}^{d-1}\abs{\scal{a,\Phi_k^*}}\\
&\leq&M(\Phi,\Psi^*)|\mbox{supp}_\Phi\,a|^{1/2}
\left(\sum_{k=0}^{d-1}\abs{\scal{a,\Phi_k^*}}^2\right)^{1/2}\\
&\leq&\beta(\Phi)M(\Phi,\Psi^*)\norm{a}_{\ell^0(\Phi)}^{1/2}\,\|a\|_{\ell^2}\\
&\leq&\frac{\beta(\Phi)}{\alpha(\Psi)}M(\Phi,\Psi^*)\norm{a}_{\ell^0(\Phi)}^{1/2}
\left(\sum_{k=0}^{d-1}\abs{\scal{a,\Psi_k^*}}^2\right)^{1/2}\\
&\leq&\frac{\beta(\Phi)}{\alpha(\Psi)}M(\Phi,\Psi^*)
\norm{a}_{\ell^0(\Phi)}^{1/2}\norm{a}_{\ell^0(\Psi)}^{1/2}\max_{k=0,\ldots,d-1}|\scal{a,\Psi_k^*}|.
\end{eqnarray*}
It follows that
$$
\norm{a}_{\ell^0(\Phi)}\norm{a}_{\ell^0(\Psi)}\geq \left(\frac{\beta(\Phi)}{\alpha(\Psi)}M(\Phi,\Psi^*)\right)^{-2}.
$$
Exchanging the roles of $\Phi$ and $\Psi$, we obtain the result.
\end{proof}

\begin{remarknum}
Let $\Phi$ and $\Psi$ be two unbiased orthonormal bases. The lemma then reads
$|\mbox{supp}_\Phi\,a||\mbox{supp}_{\Psi}\,a|\geq d$. We can thus reformulate the lemma as follows:
if $S$ and $\Sigma$ are two subsets of $\{1,\ldots,d\}$ with $|S||\Sigma|<d$, and if
$\mbox{supp}_\Phi\,a\subset S$ and $\mbox{supp}_{\Psi}\,a\subset\Sigma$ then $a=0$.
\end{remarknum}

We will now switch to strong annihilating pairs. First, note that if $(S,\Sigma)$ is a weak annihilating pair,
then it
is also a \emph{strong annihilating pair}, {\it i.e.}
there exists a constant $C=C(S,\Sigma,\Phi,\Psi)$ such that, for every $a\in\C^d$,
\begin{equation}
\label{eq:strtriv}
\|a\|_2\leq C\Bigl(\|a\|_{\ell^2(\Phi,S^c)}+\|a\|_{\ell^2(\Psi,\Sigma^c)}\Bigr).
\end{equation}
Indeed, $C=D^{-1}$ where $D$ is the minimum of $\|a\|_{\ell^2(\Phi,S^c)}+\|a\|_{\ell^2(\Psi,\Sigma^c)}$
over $a\in\S^{d-1}$, the unit sphere of $\C^d$. This minimum is reached in some $a_0\in\S^{d-1}$ and is thus non-zero since $(S,\Sigma)$ is a
weak annihilating pair.
%
However, this does not allow to obtain an estimate on the constant $C$. This will be overcome in the next theorem,
Theorem A from the introduction.

\begin{theorem}\label{th:th1intro}\ \\
Let $d$ be an integer. Let $\Phi$ and $\Psi$ be two orthonormal bases of $\C^d$
and $S,\Sigma$ be two subsets of $\{0,\ldots,d-1\}$. Assume that $|S||\Sigma|<\dst\frac{1}{M(\Phi,\Psi)^2}$.
Then for every $a\in\C^d$,
$$
\|a\|_2\leq\left(1+\frac{1}{1-M(\Phi,\Psi)(|S||\Sigma|)^{1/2}}\right)\left(\norm{a}_{\ell^2(\Phi,S^c)}+\norm{a}_{\ell^2(\Psi,\Sigma^c)}\right).
$$
\end{theorem}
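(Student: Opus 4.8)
The plan is to follow the strategy of Donoho--Stark, working with the projections $P_\Phi$ onto the $\Phi$-coordinates in $S$ and $P_\Psi$ onto the $\Psi$-coordinates in $\Sigma$, and to show that the composition $P_\Psi P_\Phi$ is a strict contraction on $\C^d$ with norm bounded by $M(\Phi,\Psi)(|S||\Sigma|)^{1/2}<1$. Concretely, for $a\in\C^d$ write $b=P_\Phi a$, so $b=\sum_{j\in S}\scal{a,\Phi_j}\Phi_j$ and $\supp_\Phi b\subset S$. Since $\Phi$ is orthonormal, $\scal{b,\Psi_k}=\sum_{j\in S}\scal{a,\Phi_j}\scal{\Phi_j,\Psi_k}$, and the same Cauchy--Schwarz manipulation as in Lemma~\ref{lem:1} (now in $\ell^2$ rather than in $\ell^\infty$) gives, for each $k$,
$|\scal{b,\Psi_k}|\leq M(\Phi,\Psi)|S|^{1/2}\norm{b}_2$; summing the squares over $k\in\Sigma$ yields
$\norm{P_\Psi b}_2=\bigl(\sum_{k\in\Sigma}|\scal{b,\Psi_k}|^2\bigr)^{1/2}\leq M(\Phi,\Psi)(|S||\Sigma|)^{1/2}\norm{b}_2\leq M(\Phi,\Psi)(|S||\Sigma|)^{1/2}\norm{a}_2$. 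Thus $\norm{P_\Psi P_\Phi}_{\ell^2_d\to\ell^2_d}\leq M:=M(\Phi,\Psi)(|S||\Sigma|)^{1/2}<1$ by hypothesis.

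Next I would estimate $\norm{a}_2$ by inserting and removing these projections. Write $a=P_\Phi a+(I-P_\Phi)a$; the second term has $\ell^2$-norm exactly $\norm{a}_{\ell^2(\Phi,S^c)}$ since $\Phi$ is orthonormal. For the first term, apply the same splitting with $P_\Psi$: $P_\Phi a=P_\Psi P_\Phi a+(I-P_\Psi)P_\Phi a$. The key point is to bound $\norm{(I-P_\Psi)P_\Phi a}_2$ by something controllable. Since $(I-P_\Psi)a$ has $\ell^2$-norm $\norm{a}_{\ell^2(\Psi,\Sigma^c)}$, and $(I-P_\Psi)P_\Phi a=(I-P_\Psi)a-(I-P_\Psi)(I-P_\Phi)a$, we get
$\norm{(I-P_\Psi)P_\Phi a}_2\leq\norm{a}_{\ell^2(\Psi,\Sigma^c)}+\norm{(I-P_\Phi)a}_2=\norm{a}_{\ell^2(\Psi,\Sigma^c)}+\norm{a}_{\ell^2(\Phi,S^c)}$, using that $I-P_\Psi$ is a contraction. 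Therefore
$\norm{P_\Phi a}_2\leq\norm{P_\Psi P_\Phi a}_2+\norm{(I-P_\Psi)P_\Phi a}_2\leq M\norm{P_\Phi a}_2+\norm{a}_{\ell^2(\Phi,S^c)}+\norm{a}_{\ell^2(\Psi,\Sigma^c)}$, whence $\norm{P_\Phi a}_2\leq\dfrac{1}{1-M}\bigl(\norm{a}_{\ell^2(\Phi,S^c)}+\norm{a}_{\ell^2(\Psi,\Sigma^c)}\bigr)$. Combining with $\norm{a}_2\leq\norm{P_\Phi a}_2+\norm{a}_{\ell^2(\Phi,S^c)}$ gives exactly the stated inequality with constant $1+\dfrac{1}{1-M}$.

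The main obstacle, and the only step requiring care, is controlling the cross term $\norm{(I-P_\Psi)P_\Phi a}_2$: one must resist the temptation to bound it crudely by $\norm{P_\Phi a}_2$ (which would not close the argument, since one needs a bound in terms of the error norms) and instead route it through $(I-P_\Psi)a$ as above, paying one extra copy of $\norm{a}_{\ell^2(\Phi,S^c)}$. The operator-norm bound on $P_\Psi P_\Phi$ is the heart of the matter, but it is a direct $\ell^2$ analogue of the $\ell^0$ estimate already carried out in Lemma~\ref{lem:1}, so no new idea is needed there; the bookkeeping of the triangle inequalities is what has to be arranged just right to land the constant $1+1/(1-M)$ rather than something weaker.
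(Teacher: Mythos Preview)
Your proof is correct and is essentially the same as the paper's: the paper introduces the change-of-basis operator $U$ with $U\Psi_j=\Phi_j$ and works with $P_\Sigma U P_S$ (bounded via the Hilbert--Schmidt norm), which is exactly your $P_\Psi P_\Phi$ up to a unitary conjugation, and then performs the same triangle-inequality bookkeeping to reach the constant $1+\frac{1}{1-M}$. The only cosmetic difference is that the paper phrases the key contraction bound as $\norm{P_\Sigma U P_S}_{HS}\le M(\Phi,\Psi)(|S||\Sigma|)^{1/2}$ whereas you obtain it by a direct Cauchy--Schwarz on the coefficients; these are the same computation.
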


\begin{remark}\ \\
For comparison with the previous lemma, recall that as $\Phi,\Psi$ are orthonormal
they are equal to their dual bases and that their lower and upper Riesz bounds are $1$.
\end{remark}

\begin{proof} The proof we present here is in the spirit of \cite{HJ} and is also inspired by \cite{DS}.

Let $U$ be the change of basis from
$\Psi$ to $\Phi$, that is the linear operator defined by $U\Psi_i=\Phi_i$. We will still denote by $U$
its matrix in the basis $\Phi_i$, so that $U=[U_{i,j}]_{1\leq i,j\leq d}$ is given by
$U_{i,j}=\scal{\Phi_j,\Psi_i}$. As $U$ is unitary, $U^*\Phi_i=\Psi_i$.

For a set $E\subset\{1,\ldots,d\}$ let $P_E$ be the projection
$P_Ea=\sum_{j\in E}\scal{a,\Phi_j}\Phi_j$. A direct computation then shows that 
$\norm{a}_{\ell^2(\Phi,S^c)}=\|P_{S^c}a\|_2$ while
\begin{eqnarray}
\norm{a}_{\ell^2(\Psi,E)}&=&\left(\sum_{j\in E}\abs{\scal{a,\Psi_j}}^2\right)^{1/2}
=\left(\sum_{j\in E}\abs{\scal{a,U^*\Phi_j}}^2\right)^{1/2}\notag\\
&=&\left(\sum_{j\in E}\abs{\scal{Ua,\Phi_j}}^2\right)^{1/2}
=\left(\sum_{j=1}^d\abs{\scal{P_EUa,\Phi_j}}^2\right)^{1/2}\notag\\
&=&\|P_EUa\|_2.\notag
\end{eqnarray}

Assume first that $a\in\C^d$ is such that $\mbox{supp}_\Phi\,a\subset S$. Then
$$
\|P_{\Sigma}Ua\|_2=\|P_\Sigma UP_Sa\|_2
\leq \norm{P_\Sigma UP_S}_{\ell^2\to\ell^2}\norm{a}_{\ell^2(\Phi,S)}.
$$
It follows that
\begin{eqnarray}
\norm{a}_{\ell^2(\Psi,\Sigma^c)}&=&
\|P_{\Sigma^c}Ua\|_2\geq \|Ua\|_2-\|P_{\Sigma}Ua\|_2
\geq\|a\|_2-\norm{P_\Sigma UP_S}_{\ell^2\to\ell^2}\norm{a}_{\ell^2(\Phi,S)}\nonumber\\
&=&\Bigl(1-\norm{P_\Sigma UP_S}_{\ell^2\to\ell^2}\Bigr)\norm{a}_{\ell^2(\Phi,S)}.\label{eq:CS1}
\end{eqnarray}
The last equality comes from the assumption $\mbox{supp}_\Phi\,x\subset S$ which implies $\|a\|_2=\norm{a}_{\ell^2(\Phi,S)}$.

Note that, if we are able to prove that $\norm{P_\Sigma UP_S}_{\ell^2\to\ell^2}<1$, then this inequality
implies that $(S,\Sigma)$ is an annihilating pair.
The following computation allows to estimate the constant $C(S,\Sigma)$ appearing in the definition
of a strong annihilating pair: write 
$D=\Bigl(1-\norm{P_\Sigma UP_S}_{\ell^2\to\ell^2}\Bigr)^{-1}$
then for $a\in\C^d$,
\begin{eqnarray}
\|a\|_2&=&\|P_Sa+P_{S^c}a\|_2 \leq\|P_Sa\|_2+\|P_{S^c}a\|_2
\leq D\|P_{\Sigma^c}UP_Sa\|_2+\|P_{S^c}a\|_2\nonumber\\
&=&D\|P_{\Sigma^c}U(a-P_{S^c}a)\|_2+\|P_{S^c}a\|_2\nonumber\\
&\leq&D\|P_{\Sigma^c}Ua\|_2+D\|UP_{S^c}a\|_2+\|P_{S^c}a\|_2\label{eq:simple}
\end{eqnarray}
since $\|P_{\Sigma^c}x\|_2\leq\|x\|_2$ for every $x\in\C^d$. Now, as $U$ is unitary, we get
$$
\|a\|_2\leq D\|P_{\Sigma^c}Ua\|_2+\bigl(1+D\bigr)\|P_{S^c}a\|_2
$$
which immediately gives an estimate of the desired form with 
$$
C(S,\Sigma,\Phi,\Psi)=1+\Bigl(1-\norm{P_\Sigma UP_S}_{\ell^2\to\ell^2}\Bigr)^{-1}.
$$
It remains to give an upper bound on $\norm{P_\Sigma UP_S}_{\ell^2\to\ell^2}$:
\begin{eqnarray}
\norm{P_\Sigma UP_S}_{\ell^2\to\ell^2}&\leq&\norm{P_\Sigma UP_S}_{HS}
=\left(\sum_{i=1}^d\sum_{j=1}^d\abs{\scal{\Phi_i,P_\Sigma UP_S\Phi_j}}^2\right)^{1/2}\nonumber\\
&=&\left(\sum_{i\in\Sigma}\sum_{j\in S}\abs{\scal{\Phi_i,U\Phi_j}}^2\right)^{1/2}\nonumber\\
&\leq&M(\Phi,\Psi)(|S||\Sigma|)^{1/2}\label{eq:CS2}
\end{eqnarray}
which completes the proof of the theorem.
\end{proof}

\begin{remark}
A similar result can be obtained for more general bases.
%
%
Let us outline the proof of such a result: let $\Phi,\Psi$ be two bases of $\C^d$ and $S,\Sigma$ two subsets of $\{0,\ldots,d-1\}$ such that
$|S||\Sigma|\beta(\Phi)^2M(\Phi,\Psi^*)^2<\alpha(\Psi)^2$.
 The following computation replaces \eqref{eq:CS1} and \eqref{eq:CS2}:
if $\supp_\Phi a\subset S$,
\begin{eqnarray*}
\sum_{j\in\Sigma}|\scal{a,\Psi_j^*}|^2
&=&\sum_{j\in\Sigma}\abs{\sum_{k\in S}\scal{a,\Phi_k^*}\scal{\Phi_k,\Psi_j^*}}^2
\leq\sum_{j\in\Sigma}\left(\sum_{k\in S}|\scal{a,\Phi_k^*}|^2\right)
\left(\sum_{k\in S}|\scal{\Phi_k,\Psi_j^*}|^2\right)\\
&\leq&|S||\Sigma|M(\Phi,\Psi^*)^2\sum_{k\in S}|\scal{a,\Phi_k^*}|^2\\
&\leq&|S||\Sigma|M(\Phi,\Psi^*)^2\beta(\Phi)^2\norm{a}^2.
\end{eqnarray*}
But then
\begin{eqnarray}
\norm{a}^2_{\ell^2(\Psi,\Sigma^c)}&=&\sum_{j=0}^{d-1}|\scal{a,\Psi_j^2}|^2-\sum_{j\in\Sigma}|\scal{a,\Psi_j^*}|^2\nonumber\\
&\geq&\bigl(\alpha(\Psi)^2-|S||\Sigma|M(\Phi,\Psi^*)^2\beta(\Phi)^2\bigr)\norm{a}^2.\label{eq:CS3}
\end{eqnarray}
It then remains to mimic the computation in \eqref{eq:simple} to obtain the result:
write \eqref{eq:CS3} as $\norm{a}\leq D \norm{a}_{\ell^2(\Psi,\Sigma^c)}$ if $\supp_\Phi a\subset S$
(note that the hypothesis on $S,\Sigma$ is equivalent to $D>0$).
Now, if $a\in\C^d$, write $a=a_S+a_{S^c}$ where $\supp_\Phi a_S\subset S$ and $\supp_\Phi a_{S^c}\subset S^c$.
Then
\begin{eqnarray*}
\norm{a}_2&\leq&\|a_S\|_2+\|a_{S^c}\|_2\leq D \|a_S\|_{\ell^2(\Psi,\Sigma^c)}+\|a_{S^c}\|_2\\
&\leq& D \|a\|_{\ell^2(\Psi,\Sigma^c)}+D\|a_{S^c}\|_{\ell^2(\Psi,\Sigma^c)}+\|a_{S^c}\|_2
\leq D \|a\|_{\ell^2(\Psi,\Sigma^c)}+\bigl(1+D\beta(\Psi)\bigr)\|a_{S^c}\|_2\\
&\leq&D \|a\|_{\ell^2(\Psi,\Sigma^c)}+\frac{1+D\beta(\Psi)}{\alpha(\Phi)}\|a\|_{\ell^2(\Phi,S^c)}
\leq \frac{1+D\beta(\Psi)}{\alpha(\Phi)}\bigl(\|a\|_{\ell^2(\Psi,\Sigma^c)}+\|a\|_{\ell^2(\Phi,S^c)}\bigr).
\end{eqnarray*}
Of course, we may exchange the roles of $\Phi$ and $\Psi$ in these computations.
\end{remark}

%

\medskip

In order to illustrate our main theorem, let us show that a vector
can not be too compressible in two different bases. First, let us recall the definition.

\begin{definition}\ \\
Let $C>0$ and $\alpha>1/2$.
We will say that $a\in\C^d$ is $(C,\alpha)$-compressible in the basis $\Phi$ if, for $j=0,\ldots,d-1$,
the $j$-th biggest coefficient $|\scal{a,\Phi}|^*(j)$
of $a$ in the basis $\Phi$ satisfies $\displaystyle 
|\scal{a,\Phi}|^*(j)\leq \sqrt{2\alpha-1}\frac{C}{(j+1)^\alpha}\|a\|$.
\end{definition}

We will restrict our statement to a simple enough case, the proof being easy to adapt to more general settings:

\begin{corollary}\label{cor:comp}\ \\
Let $\Phi$ and $\Psi$ be two unbiased orthonormal bases of $\C^d$. Let $d\geq 4$, $C>0$ and $\alpha>1/2$ be such that
$\displaystyle C<\frac{( [\sqrt{d}]-1)^{\alpha -\frac{1}{2}}}{4\sqrt{d}}$
(where $[x]$ is the largest integer less than $x$). Then the only vector $a$ that is $(C,\alpha)$-compressible
in both bases is $0$.
\end{corollary}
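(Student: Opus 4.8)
The plan is to derive a contradiction from assuming there is a nonzero vector $a$ which is $(C,\alpha)$-compressible in both bases, by using Theorem~\ref{th:th1intro} with well-chosen sets $S$ and $\Sigma$. Since the bases are unbiased, $M(\Phi,\Psi)=d^{-1/2}$, so the hypothesis of Theorem~\ref{th:th1intro} becomes $|S||\Sigma|<d$. I would take $S$ and $\Sigma$ to be the index sets of the $[\sqrt{d}]-1$ largest coefficients of $a$ in $\Phi$ and in $\Psi$ respectively, so that $|S|=|\Sigma|=[\sqrt{d}]-1$ and hence $|S||\Sigma|=([\sqrt{d}]-1)^2<d$, and moreover $M(\Phi,\Psi)(|S||\Sigma|)^{1/2}=([\sqrt{d}]-1)/\sqrt{d}<1$, with $1-([\sqrt{d}]-1)/\sqrt{d}\geq 1/\sqrt{d}$. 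This makes the constant in Theorem~\ref{th:th1intro} at most $1+\sqrt{d}\leq 2\sqrt{d}$ (using $d\geq 4$, so $\sqrt d\geq 2$).

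The next step is to bound the two tail terms $\norm{a}_{\ell^2(\Phi,S^c)}$ and $\norm{a}_{\ell^2(\Psi,\Sigma^c)}$ using the compressibility hypothesis. On $S^c$ the coefficients are exactly the $j$-th largest ones for $j\geq [\sqrt{d}]-1$, each bounded by $\sqrt{2\alpha-1}\,C(j+1)^{-\alpha}\norm a$, so
\begin{equation*}
\norm{a}_{\ell^2(\Phi,S^c)}^2\leq (2\alpha-1)C^2\norm{a}^2\sum_{j\geq [\sqrt{d}]-1}\frac{1}{(j+1)^{2\alpha}}\leq (2\alpha-1)C^2\norm{a}^2\int_{[\sqrt d]-1}^{\infty}\frac{dt}{t^{2\alpha}}=C^2\norm{a}^2\frac{1}{([\sqrt d]-1)^{2\alpha-1}},
\end{equation*}
and identically for the $\Psi$-tail. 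Hence each tail norm is at most $\dst\frac{C\norm a}{([\sqrt d]-1)^{\alpha-1/2}}$, and their sum is at most $\dst\frac{2C\norm a}{([\sqrt d]-1)^{\alpha-1/2}}$.

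Plugging these into Theorem~\ref{th:th1intro} gives
\begin{equation*}
\norm{a}_2\leq 2\sqrt{d}\cdot\frac{2C\norm a}{([\sqrt d]-1)^{\alpha-1/2}}=\frac{4\sqrt d\, C}{([\sqrt d]-1)^{\alpha-1/2}}\norm a.
\end{equation*}
Since $a\neq 0$ we may divide by $\norm a$ and obtain $1\leq \dst\frac{4\sqrt d\,C}{([\sqrt d]-1)^{\alpha-1/2}}$, i.e. $C\geq\dst\frac{([\sqrt d]-1)^{\alpha-1/2}}{4\sqrt d}$, contradicting the hypothesis on $C$. Therefore $a=0$.

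The routine parts are the sum–to–integral comparison (valid since $t\mapsto t^{-2\alpha}$ is decreasing and $2\alpha>1$) and the bookkeeping with $[\sqrt d]$; the one point needing a little care is making sure the index shift is right so that the $S^c$ part really starts at $j=[\sqrt d]-1$ (the largest coefficient being indexed $j=0$, the set $S$ of the $[\sqrt d]-1$ largest ones is $\{0,\dots,[\sqrt d]-2\}$, so $S^c$ starts at $j=[\sqrt d]-1$, and $(j+1)\geq [\sqrt d]$ there, which only helps). There is no real obstacle; the main thing is simply to choose $|S|=|\Sigma|=[\sqrt d]-1$ so that both the coherence condition $|S||\Sigma|<d$ holds and the resulting constant $1+(1-([\sqrt d]-1)/\sqrt d)^{-1}\leq 1+\sqrt d$ is controlled, matching the stated bound on $C$.
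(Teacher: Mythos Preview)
Your proof is correct and follows essentially the same approach as the paper's: both choose $S$ and $\Sigma$ to be the index sets of the $k=[\sqrt d]-1$ largest coefficients in each basis, bound the tails via the compressibility assumption and a sum-to-integral comparison to get $\norm{a}_{\ell^2(\Phi,S^c)}\leq Ck^{-(\alpha-1/2)}\norm a$, and then apply Theorem~\ref{th:th1intro} to derive $C\geq ([\sqrt d]-1)^{\alpha-1/2}/(4\sqrt d)$. The only cosmetic differences are that the paper keeps $k$ generic until the last step and bounds the constant by $2/(1-k/\sqrt d)$ rather than by $1+\sqrt d$, but the arithmetic coincides.
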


\begin{proof}
Let $a\not=0$ and assume that $a$ is $(C,\alpha)$-compressible
in both bases. Without loss of generality, we may assume that $\|a\|_2=1$.

Let $\sigma=\sigma_{\Phi}$ be a permutation such that
$\Big(|\langle a, \Phi_{\sigma(j)}\rangle|\Big)_{0\leq j\leq d-1}$ is non-increasing.
For $k=1,\ldots,d$ define $S_{k}= \{\sigma_{\Phi}(0), \ldots, \sigma_{\Phi}(k-1)\}$, the set of the $k$ biggest coefficients of $a$ in the basis $\Phi$. Then
\begin{eqnarray*}
\|a\|_{\ell^{2}(\Phi,S_{k}^{c})}^{2}&=& \sum_{j \notin S_{k} }|\langle a, \Phi_{j}\rangle|^{2}=  \sum_{j=k }^{d-1}|\langle a, \Phi_{\sigma(j)}\rangle|^{2}\\
&\leq&  (2\alpha-1) C^{2} \sum_{j=k+1 }^{d} j^{-2 \alpha}\leq (2\alpha-1) C^{2} \int_{k}^{+\infty} \frac{\mbox{d}x}{x^{2\alpha}}
=\frac{ C^2 }{k^{2\alpha -1}}.
\end{eqnarray*}
It follows that $\|a\|_{\ell^{2}(\Phi,S_{k}^{c})} \leq \frac{ C }{k^{\alpha -\frac{1}{2}}}$. In a similar way, we get
$\|a\|_{\ell^{2}(\Psi,\Sigma_{k}^{c})} \leq \frac{ C }{k^{\alpha -\frac{1}{2}}}$ where $\Sigma_k$ is the set of the $k$ biggest coefficients of $a$ in the basis $\Psi$.

Let us now apply Theorem \ref{th:th1intro} with $S=S_k$ and $\Sigma=\Sigma_k$. Then as long as $k< \sqrt{d}$, 
$\displaystyle 1\leq \frac{2}{1-\frac{k}{\sqrt{d}}} \times \frac{2C}{k^{\alpha -\frac{1}{2}}}$. In other words,
$\displaystyle C \geq \frac{1}{4}\Big( 1-\frac{k}{\sqrt{d}}\Big)k^{\alpha -\frac{1}{2}}$.
 
Assume now that $d\geq 4$ and chose $k=[\sqrt{d}]-1$ so that $k < \sqrt{d}$. It follows that
$$
C \geq\frac{1}{4} \left( 1-\frac{ [\sqrt{d}]-1}{\sqrt{d}}\right)( [\sqrt{d}]-1)^{\alpha -\frac{1}{2}} 
\geq \frac{( [\sqrt{d}]-1)^{\alpha -\frac{1}{2}}}{4\sqrt{d}} 
$$
which completes the proof.
\end{proof}

\begin{remark}\ \\
--- This corollary may be seen as a discrete analogue of Hardy's Uncertainty Principle which states
that an $L^2(\R)$ function and its Fourier transform can not both decrease too fast ({\it see} \cite{HJ,FS}).

\noindent--- The above proof also works if the bases are not unbiased, in which case the condition on $C$ has to be replaced by
$$
C<\frac{M(\Phi,\Psi)}{4}\left(\ent{\frac{1}{M(\Phi,\Psi)}}-1\right)^{\alpha-1/2}.
$$

\noindent--- Let $\Phi$ be an orthonormal basis of $\C^d$ and $a\in\C^d$ with $\norm{a}=1$ and $0< p<2$.
From Bienaym\'e-Chebyshev, we get 
$$
k+1\leq|\{j\,:|\scal{a,\Phi_j}|\geq|\scal{a,\Phi}|^*(k)\}|\leq \frac{\norm{a}_{\ell^p(\Phi)}^p}{\big(|\scal{a,\Phi}|^*(k)\big)^p}
$$
thus
$$
|\scal{a,\Phi}|^*(k)\leq\frac{\norm{a}_{\ell^p(\Phi)}}{(k+1)^{1/p}}=\sqrt{\frac{2}{p}-1}\left(\sqrt{\frac{p}{p-2}}
\norm{a}_{\ell^p(\Phi)}\right)(k+1)^{-1/p}.
$$
It follows that $a$ is $\dst\left(\sqrt{\frac{p}{2-p}}\norm{a}_{\ell^p(\Phi)},\frac{1}{p}\right)$-compressible in $\Phi$.

This shows that a vector can not have coefficients in two bases with too small $\ell^p$-norm, namely:
{\sl if $\Phi$ and $\Psi$ be two unbiased orthonormal bases of $\C^d$, $d\geq 4$, and if $0<p<2$
then, for every $a\in\C^d$,}
$$
\max\bigl(\norm{a}_{\ell^p(\Phi)},\norm{a}_{\ell^p(\Psi)}\bigr)\geq\sqrt{\frac{2-p}{p}}\frac{([\sqrt{d}]-1)^{\frac{1}{p}-\frac{1}{2}}}{4\sqrt{d}}
\sim \frac{1}{4}\sqrt{\frac{2-p}{p}}d^{\frac{1}{p}-1}.
$$
\end{remark}

\subsection{Results on annihilating pairs using probability techniques}\label{sec:2.3}\ 

So far, we have only used deterministic techniques, which lead to rather weak results.
In this section, we will recall some results that may be obtained using probability methods.

First, let us describe a model of random subsets of average cardinality $k$.
Let $k\leq d$ be an integer and let $\delta_0,\ldots,\delta_{d-1}$ be $d$ independent random variables
take the value $1$ with probability $k/d$ and $0$ with probability $1-k/d$. We then define the \emph{random subset of average cardinality $k$}, $\Omega\subset\{0,\ldots,d-1\}$ by $\Omega=\{i\,:\ \delta_i=1\}$. Those sets
have of course average cardinality $k$ (which is immediate once one write $\mathbf{1}_\Omega=\sum_{j=0}^{d-1}\delta_j\mathbf{1}_j$). 
Moreover, one has the
following standard estimate ({\it see e.g.} \cite[Theorems A.1.12 and A.1.13]{Al} or \cite{Jamem}):
$$
\P\left[|\Omega-k|\ge \frac{k}{2}\right]\leq 2e^{-k/10}.
$$
Therefore, some authors call those sets ``random sets of cardinality $k$''.
In the next section, we will use the following result of
Rudelson-Vershynin \cite{RV}, (improving a result of Cand\`es-Tao): 

\begin{theorem}[Rudelson-Vershynin \cite{RV}]\label{th:RV}\ \\
There exist two absolute constants $C,c$ such that the following holds:
let $\Phi=\{\Phi_0,\ldots,\Phi_{d-1}\}$ and $\Psi=\{\Psi_0,\ldots,\Psi_{d-1}\}$ be two unbiased orthonormal bases of $\C^d$ and let $T\,:\ell^2_d\to\ell^2_d$ be defined by
$T\psi_j=\Phi_j$ for $j=0,\ldots,d-1$.

Let $0<\eta<1$, $t>1$ be real numbers and $s\leq d$ be an integer.
Let $k\geq 1$ be an integer such that,
\begin{equation}
\label{eq:EV}
k\simeq(Cts\log d)\log(Cts\log d)\log^2s.
\end{equation}

Then with probability at least
$1-7e^{-c(1-\eta)t}$, a random set $\Omega$ of average cardinality $k$ satisfies
$$
k-\sqrt{tk}\leq |\Omega|\leq k+\sqrt{tk}
$$
and $(T,\Omega,s)$ satisfies the Uniform Uncertainty Principle with Restricted Isometry Constant
$\delta_s\leq 1-\eta$. In particular, for any $S\subset\{0,\ldots,d\}$ with $|S|\leq s$, for every $a\in\ell^2_d$,
\begin{equation}
\label{eq:RV}
\norm{a}_{\ell^2}\leq \left(1+\sqrt{\frac{d}{\eta|\Omega|}}\right)\bigl(
\norm{a}_{\ell^2(\Phi,S^c)}+\norm{a}_{\ell^2(\Psi,\Omega)} \Bigr).
\end{equation}
\end{theorem}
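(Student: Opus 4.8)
The statement to prove is Theorem \ref{th:RV}, the Rudelson--Vershynin result. Since the paper explicitly attributes this to \cite{RV} and says ``we will recall some results,'' my plan is not to reprove it from scratch but to explain how it is assembled from the cited literature, with the one piece of genuine work being the derivation of \eqref{eq:RV} from the Restricted Isometry Property \eqref{eq:uup}.

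\textbf{Structure of the argument.} The plan is to split the statement into three parts. First, the probabilistic size estimate $k-\sqrt{tk}\leq|\Omega|\leq k+\sqrt{tk}$ for a random set of average cardinality $k$: this is a routine Bernstein/Chernoff bound on $|\Omega|=\sum_{j=0}^{d-1}\delta_j$, a sum of independent Bernoulli$(k/d)$ variables with mean $k$, and the tail $\P[\,|\,|\Omega|-k\,|>\sqrt{tk}\,]\leq 2e^{-ct}$ is exactly of the form quoted; I would simply cite \cite{Al} as the paper already does for the cruder bound. Second, the assertion that with probability $1-7e^{-c(1-\eta)t}$ the triple $(T,\Omega,s)$ has RIP with $\delta_s\leq 1-\eta$ under the dimension condition \eqref{eq:EV}: this is the heart of Rudelson--Vershynin and I would invoke it directly as a black box, since reproving the uniform bound on $\norm{P_\Omega T P_S}$ over all $|S|=s$ requires their chaining/entropy machinery (Dudley's inequality, the Rudelson selection lemma) which is well outside the scope of a ``recall'' section.

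\textbf{The one computation I would actually do} is deriving \eqref{eq:RV} from \eqref{eq:uup}. This is precisely the ``standard computation'' the introduction refers to at \eqref{eq:simple}. Given that $(T,\Omega,s)$ has RIP with constant $\delta_s\leq 1-\eta$, the lower bound in \eqref{eq:uup} reads $\eta\frac{|\Omega|}{d}\norm{a}_2^2\leq\norm{Ta}_{\ell^2(\Omega)}^2$ for every $a$ with $\supp a\subset S$, $|S|\leq s$; equivalently $\norm{a}_2\leq\sqrt{d/(\eta|\Omega|)}\,\norm{P_\Omega T a}_2$ when $\supp a\subset S$. Then I would run the telescoping argument of \eqref{eq:simple} verbatim with $\Sigma^c$ there playing the role of $\Omega$ here: writing $a=P_S a+P_{S^c}a$ (projections in the basis $\Phi=T\Delta$), bounding $\norm{P_S a}_2\leq\sqrt{d/(\eta|\Omega|)}\norm{P_\Omega T P_S a}_2$, replacing $P_S a$ by $a-P_{S^c}a$ inside, using unitarity of $T$ and $\norm{P_\Omega x}_2\leq\norm{x}_2$, and collecting terms to get $\norm{a}_2\leq\bigl(1+\sqrt{d/(\eta|\Omega|)}\bigr)\bigl(\norm{a}_{\ell^2(\Phi,S^c)}+\norm{a}_{\ell^2(\Psi,\Omega)}\bigr)$, which is exactly \eqref{eq:RV} once one notes $\norm{P_\Omega T a}_2=\norm{a}_{\ell^2(\Psi,\Omega)}$ by the same change-of-basis identity established in the proof of Theorem \ref{th:th1intro}.

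\textbf{Main obstacle.} The only real obstacle is honesty about provenance: the RIP-with-high-probability statement is a deep theorem and I would present it as cited, not proved. The derivation of \eqref{eq:RV} from it is genuinely routine (identical in form to \eqref{eq:simple}), and the size estimate on $|\Omega|$ is a textbook concentration inequality. So the ``proof'' here is really a stitching-together: quote \cite{RV} for the RIP, quote \cite{Al} for concentration, and carry out the five-line deduction of the annihilating-pair inequality. I would therefore keep the written proof short, pointing to \eqref{eq:simple} for the repeated computation and to \cite{RV,Al} for the two inputs.
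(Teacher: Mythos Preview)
Your proposal is correct and matches the paper's treatment: the paper does not prove Theorem~\ref{th:RV} at all but simply cites \cite{RV} (with the one-line remark that the parameter $\eta$ can be inserted by a straightforward modification of their proof), and the derivation of \eqref{eq:RV} from the RIP is exactly the computation at \eqref{eq:simple} that you identify. Your plan to quote \cite{RV} for the RIP, \cite{Al} for the concentration of $|\Omega|$, and to point to \eqref{eq:simple} for the annihilating-pair inequality is precisely how the paper handles it.
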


The parameter $\eta$ is not present in their statement, but it can
be obtained by straightforward modification of their proof.

Taking $s=\dst\frac{d}{\log^5 d}$, $t=\dst\frac{\log d}{2C}$ we obtain $k\simeq d/2$. Thus, with probability $\geq 1-7d^{-\kappa(1-\eta)}$ ($\kappa$ some universal constant) 
$\Omega$ has cardinal $|\Omega|=d/2+O(d^{1/2}\log^{1/2}d)$ and
every set $S$ with cardinal $|S|\leq \frac{d}{\log^5 d}$ and $\Omega^c$ form a strong annihilating pair
in the sense of \eqref{eq:RV} which may now (for $d$ big enough) be reduced to
\begin{equation}
\label{eq:RV2}
\norm{a}_{\ell^2}\leq \frac{2}{\sqrt{\eta}}\bigl(
\norm{a}_{\ell^2(\Phi,S^c)}+\norm{a}_{\ell^2(\Psi,\Omega)} \Bigr).
\end{equation}

\medskip

Another question that one may ask is the following. Given a set $\Sigma$,
does there exist a ``large'' set $S$ such that $(S,\Sigma)$ is an annihilating pair? 
In order to answer this question, let us recall that Bourgain-Tzafriri \cite{BT1} proved the following:

\begin{theorem}[Bourgain-Tzafriri, \cite{BT1}]\footnote{The dimension of the $\ell^2$ space in this theorem is denoted by $n$ as we will apply it to a $n$-dimensional subspace of $\ell^2_d$.}\ \\
If $T\,:\ell^2_n\to\ell^2_n$ is 
such that $\|Te_i\|_2=1$ for $i=0,\ldots,n-1$ (where the $e_i$'s stand for the standard basis of $\ell_n^2$), then there exists a set $\sigma\subset\{0,\ldots,n-1\}$ with $|\sigma|\geq\frac{n}{240\norm{T}_{\ell^2_d\to\ell^2_n}^2}$ such that, for every
$a=(a_j)_{j=0,\ldots,n-1}$ with support in $\sigma$ such that $\norm{Ta}\geq\frac{1}{12}\norm{a}$.

In other words, the matrix of $T$ in the standard basis has a well 
\end{theorem}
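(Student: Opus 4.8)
The plan is to reduce everything to a statement about the Gram matrix $G=T^*T$. Since $\norm{Te_i}_2=1$, the matrix $G$ has all diagonal entries equal to $1$, operator norm $\norm{G}_{\ell^2_n\to\ell^2_n}=\norm{T}_{\ell^2_n\to\ell^2_n}^2=:M^2$, and trace $\mathrm{tr}\,G=n$; in particular its ``effective rank'' $\mathrm{tr}\,G/\norm{G}=n/M^2$ is large. A set $\sigma$ is good for the conclusion as soon as the compression $R_\sigma GR_\sigma$ (rows and columns indexed by $\sigma$) satisfies $R_\sigma GR_\sigma\succeq\frac12 I$ on $\span\{e_i:i\in\sigma\}$: for then any $a$ with $\supp a\subset\sigma$ obeys $\norm{Ta}_2^2=\scal{Ga,a}=\scal{R_\sigma GR_\sigma a,a}\geq\frac12\norm{a}_2^2$, hence $\norm{Ta}_2\geq\frac{1}{\sqrt2}\norm{a}_2>\frac1{12}\norm{a}_2$, which is better than asked. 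Writing $D=G-I_n$ — self-adjoint, with zero diagonal, $\norm{D}_{\ell^2\to\ell^2}\leq M^2$, and $\mathrm{tr}\,D=0$ — the whole problem becomes: find $\sigma$ with $|\sigma|\geq n/(cM^2)$ and $\norm{R_\sigma DR_\sigma}_{\ell^2\to\ell^2}\leq\frac12$.

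The first thing to try is a random selection: let $\delta_0,\dots,\delta_{n-1}$ be independent Bernoulli variables with $\P(\delta_i=1)=p$, set $\sigma=\{i:\delta_i=1\}$, and choose $p\sim 1/M^2$, so that $\mathbb{E}|\sigma|=pn\sim n/M^2$ and $|\sigma|\geq pn/2$ with probability $\geq 1-2e^{-pn/10}$ by the Chernoff bound quoted after \eqref{eq:EV}. One is then left to prove $\mathbb{E}\,\norm{R_\sigma DR_\sigma}_{\ell^2\to\ell^2}\leq\frac14$. Expanding $R_\sigma DR_\sigma=\sum_{i,j}\delta_i\delta_j D_{i,j}\,e_ie_j^*$, a decoupling inequality replaces one family of selectors by an independent copy and, after conditioning, leaves a sum of independent rank-one random operators, to which a noncommutative Khintchine / matrix-Bernstein estimate (of the type underlying Theorem \ref{th:RV}) can be applied. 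The catch — and this is the main obstacle — is that the off-the-shelf bounds here are of order $\sqrt p\,\norm{D}\lesssim M^2\sqrt p\sim M$, i.e.\ they do not go to $0$ when $p\sim 1/M^2$ with a universal constant, and the honest estimates carry an extra $\sqrt{\log n}$. Overcoming this is precisely Bourgain and Tzafriri's contribution: they iterate the selection and control the bad events by a careful combinatorial net argument, and it is this iteration that produces the concrete but inelegant constants $240$ and $\frac1{12}$.

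An alternative route, which sidesteps probability entirely and yields the sharp dependence, is the deterministic barrier-function (interlacing-polynomials) method of Spielman and Srivastava: one builds $\sigma$ greedily, adding one index at a time while maintaining a rational ``potential'' that keeps the smallest eigenvalue of the current compression of $G$ bounded away from $0$, which produces a $\sigma$ with $|\sigma|\gtrsim n/M^2$ and $R_\sigma GR_\sigma$ uniformly invertible, with optimal constants. For the purposes of this paper the theorem is used only as a black box — to deduce Proposition \ref{prop:BT} about unbiased bases — so it suffices to cite \cite{BT1}; the discussion above indicates the route one would follow to reprove it.
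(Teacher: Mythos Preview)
The paper does not prove this theorem: it is quoted from \cite{BT1} and used as a black box, with the paper only remarking that the original proof is probabilistic, that Spielman--Srivastava \cite{SS} later gave a constructive argument, and that the explicit constants $240$ and $1/12$ appear in \cite{Jamem}. Your proposal recognises exactly this and is entirely in line with the paper's treatment; your closing sentence (``for the purposes of this paper the theorem is used only as a black box\dots'') is precisely the paper's stance.

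Your additional discussion --- the reduction to the off-diagonal Gram matrix $D=T^*T-I$, the naive Bernoulli selection with $p\sim 1/M^2$, the identification of the $\sqrt{\log n}$ obstruction in off-the-shelf matrix concentration bounds, and the mention of the deterministic barrier alternative --- is accurate commentary that the paper does not include but which matches the brief pointers the paper gives to \cite{BT1} and \cite{SS}. One small correction: the Spielman--Srivastava argument you allude to is not the interlacing-polynomials method (that is their later work with Marcus on Kadison--Singer); the restricted-invertibility paper \cite{SS} uses a single rank-one-update potential (tracking $\mathrm{tr}\,(A-bI)^{-1}$), which is the predecessor of, but distinct from, the two-sided barrier and the interlacing machinery.
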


In other words, this theorem states that a matrix with columns of norm $1$ has a well-conditioned sub-matrix
of large size.
The original proof of this theorem uses probabilistic techniques (somewhat similar to those used later in \cite{RV}). Recently, an elementary constructive proof of the set $\sigma$ was given by
Spielman-Srivastava \cite{SS}.
The values of the numerical constants where given in \cite{Jamem}.

We may apply this theorem in the following way: consider two mutually unbiased orthonormal bases
$\Phi=\{\phi _j\}$ and $\Psi=\{\psi_j\}$ of $\ell^2_d$ and let $S,\Omega\subset\{0,\ldots,d-1\}$ be two sets with $|S|=|\Omega|=n$ and enumerate them:
$S=\{j_0,\ldots,j_{n-1}\}$ and $\Omega=\{\omega_0,\ldots,\omega_{n-1}\}$. Let $T$ be the operator defined by
$T\phi_{j_k}=\dst\sqrt{\frac{d}{n}}\psi_{\omega_k}$ for $k=0,\ldots,n-1$. Then
$T$ satisfies the hypothesis of Bourgain-Tzafriri's Theorem and $\norm{T}^2\leq\dst\frac{d}{n}$.
Thus there exists $\sigma\subset S$ with $|\sigma|\geq n^2/240d$ such that, for every
$a\in\ell^2_n$ with support in $\sigma$,
$$
\norm{a}_{\ell^2(\Psi,\Omega)}\geq\frac{1}{12}\sqrt{\frac{n}{d}}\norm{a}_{\ell^2(\sigma)}.
$$
From which we immediately deduce the following (where $\Sigma=\Omega^c$):

\begin{proposition}\label{prop:BT}\ \\
Let $\Phi$ and $\Psi$ be two mutually unbiased bases of $\ell^2_d$ and let $S,\Sigma\subset\{0,\ldots,d-1\}$ be two sets with $|S|+|\Sigma|=d$. Then there exists
$\sigma\subset S$ such that $|\sigma|\geq\dst\frac{(d-|\Sigma|)^2}{240d}$ and,
for every $a\in\ell^2_d$,
\begin{equation}
\label{eq:BT2}
\norm{a}_{\ell_2}\leq\frac{13}{\sqrt{1-|\Sigma|/d}}\bigl(\norm{a}_{\ell^2(\Phi,\sigma^c)}
+\norm{a}_{\ell^2(\Psi,\Sigma^c)}\bigr).
\end{equation}
\end{proposition}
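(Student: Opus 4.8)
The plan is to combine the Bourgain--Tzafriri estimate (already set up in the paragraph preceding the statement) with the ``$P_{S^c}$-splitting'' trick used in the proof of Theorem~\ref{th:th1intro}. Recall that from the construction before the proposition, with $|S|=|\Omega|=n$ and $\Omega=\Sigma^c$, we obtained a set $\sigma\subset S$ with $|\sigma|\ge n^2/240d=(d-|\Sigma|)^2/240d$ such that every $a\in\ell^2_d$ with $\supp_\Phi a\subset\sigma$ satisfies
\begin{equation}
\label{eq:BTstart}
\norm{a}_{\ell^2(\Psi,\Sigma^c)}=\norm{a}_{\ell^2(\Psi,\Omega)}\ge\frac{1}{12}\sqrt{\frac{n}{d}}\,\norm{a}_{\ell^2(\Phi,\sigma)}
=\frac{1}{12}\sqrt{1-\frac{|\Sigma|}{d}}\,\norm{a}_2,
\end{equation}
the last equality because $n/d=(d-|\Sigma|)/d$ and, since $\supp_\Phi a\subset\sigma$, $\norm{a}_{\ell^2(\Phi,\sigma)}=\norm{a}_2$. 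Writing $D=12(1-|\Sigma|/d)^{-1/2}$, \eqref{eq:BTstart} reads $\norm{a}_2\le D\,\norm{a}_{\ell^2(\Psi,\Sigma^c)}$ whenever $\supp_\Phi a\subset\sigma$.

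Next I would run exactly the argument of \eqref{eq:simple}, but with $\sigma$ in the role of $S$ and noting that here $\Phi$ and $\Psi$ play symmetric concrete roles (both orthonormal), so the change-of-basis operator $U$ with $U\Psi_j=\Phi_j$ is unitary and $\norm{Ux}_2=\norm{x}_2$, $\norm{P_{\Sigma^c}x}_2\le\norm{x}_2$. For arbitrary $a\in\ell^2_d$, split $a=a_\sigma+a_{\sigma^c}$ with $\supp_\Phi a_\sigma\subset\sigma$ and $\supp_\Phi a_{\sigma^c}\subset\sigma^c$. Then
\begin{eqnarray*}
\norm{a}_2&\le&\norm{a_\sigma}_2+\norm{a_{\sigma^c}}_2\le D\,\norm{a_\sigma}_{\ell^2(\Psi,\Sigma^c)}+\norm{a_{\sigma^c}}_2\\
&=&D\,\norm{P_{\Sigma^c}U(a-a_{\sigma^c})}_2+\norm{a_{\sigma^c}}_2\\
&\le&D\,\norm{P_{\Sigma^c}Ua}_2+D\,\norm{Ua_{\sigma^c}}_2+\norm{a_{\sigma^c}}_2\\
&=&D\,\norm{a}_{\ell^2(\Psi,\Sigma^c)}+(1+D)\,\norm{a}_{\ell^2(\Phi,\sigma^c)},
\end{eqnarray*}
using $\norm{P_{\Sigma^c}U(\cdot)}_2=\norm{\cdot}_{\ell^2(\Psi,\Sigma^c)}$ (computed as in the proof of Theorem~\ref{th:th1intro}) and $\norm{Ua_{\sigma^c}}_2=\norm{a_{\sigma^c}}_2=\norm{a}_{\ell^2(\Phi,\sigma^c)}$. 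Since $1+D=1+12(1-|\Sigma|/d)^{-1/2}\le 13(1-|\Sigma|/d)^{-1/2}$ (as $1-|\Sigma|/d\le 1$, so $1\le(1-|\Sigma|/d)^{-1/2}$), and also $D\le 13(1-|\Sigma|/d)^{-1/2}$, we obtain \eqref{eq:BT2}.

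The only mild subtlety — and the step I would double-check — is bookkeeping of the constant: one must verify $|\sigma|\ge(d-|\Sigma|)^2/240d$ survives (it does, since $|\sigma|\ge n^2/240d$ and $n=|S|=d-|\Sigma|$), and that replacing $1+D$ by $13(1-|\Sigma|/d)^{-1/2}$ is legitimate, which it is precisely because $|\Sigma|\le d$ forces $(1-|\Sigma|/d)^{-1/2}\ge1$. There is no real obstacle here: the heavy lifting (the existence of the well-conditioned submatrix) is entirely contained in Bourgain--Tzafriri's theorem, quoted above, and the rest is the same two-line splitting already used twice in the paper. I would simply remark that the hypothesis $|S|+|\Sigma|=d$ is used only to identify $n=d-|\Sigma|$; an inequality $|S|+|\Sigma|\le d$ would give the same conclusion with $\sigma\subset S$ of size $\ge|S|^2/240d$.
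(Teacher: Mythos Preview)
Your proof is correct and is precisely the argument the paper intends: the paragraph preceding the proposition sets up the Bourgain--Tzafriri estimate \eqref{eq:BTstart}, and the phrase ``from which we immediately deduce'' refers to exactly the $P_{\sigma^c}$-splitting computation \eqref{eq:simple} you carried out, yielding the constant $1+D\le 13(1-|\Sigma|/d)^{-1/2}$. Your closing remark that $|S|+|\Sigma|\le d$ would suffice is also a fair observation.
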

Of course, this proposition only makes sense when $|\Sigma|\leq d-\sqrt{240d}$ otherwise there is no guarantee to have $\sigma\not=\emptyset$. We may thus rewrite \eqref{eq:BT2} as
$$
\norm{a}_{\ell_2}\leq 4d^{1/4}\bigl(\norm{a}_{\ell^2(\Phi,\sigma^c)}
+\norm{a}_{\ell^2(\Psi,\Sigma^c)}\bigr).
$$

\section{The uncertainty principle for the discrete short-time Fourier transform}

The short-time Fourier transform (or windowed Fourier transform) is a useful tool in time-frequency analysis and in signal processing. For $f,g\in L^2(\R)$, we define $V_gf$ on $\R^2$ by the formula
$$
V_gf(x,\xi)=\int f(t)\overline{g(t-x)}e^{-2i\pi t\xi}\,\mbox{d}\xi.
$$
This may be rewritten as $V_gf(x,\xi)=\ff[f\overline{\tau_xg}](\xi)$ where $\ff$ is the Fourier transform on
$L^2(\R)$ and $\tau_xg(t)=g(t-x)$ is the translation operator. Written like this, it is straightforward to
generalize this transform to the more general setting of locally Abelian groups $G$ and its dual $\hat G$ as
$$
V_gf(x,\xi)=\int_G f(t)\overline{g(t-x)}\overline{\scal{\xi,t}}\,\mbox{d}\nu(t),\qquad (x,\xi)\in G\times\hat G
$$
where $\mbox{d}\nu_G$ is the Haar measure on $G$.

For the reader that is not acquainted with this general setting, it may be sufficient to consider $G=\Z/d\Z$ the 
cyclic group seen as $\{0,\ldots,d-1\}$,
$\hat G$ the (multiplicative group of) $d$-th roots of unity. Note that, if we identify the $d$-th root of unity
$e^{2i\pi k/d}$ with the integer $k$, then 
$\hat G$ is identified to $\{0,\ldots,d-1\}$.

For $j\in G$ and $\xi\in\hat G$, we write
$\scal{\xi,j}=\xi^j$. Then $L^2(G)$ may be seen either as the set of $d$-periodic sequences or as $\ell^2_d$.
If $a\in L^2(G)$, the discrete Fourier transform $\dst\ff_d[a](k)=\frac{1}{\sqrt{d}}\sum_{j=0}^{d-1}a_je^{-2i\pi jk/d}$ may be seen as a function on $\hat G$ if we identify $k$ with $\zeta=e^{2i\pi k/d}$:
$$
\ff_G[a](\zeta)=\frac{1}{\sqrt{d}}\sum_{j\in\{0,\ldots,d-1\}}a_j\overline{\scal{\zeta,j}}.
$$
Note that $G$ may also be seen as the ``dual group'' of $\hat G$ if we write $\scal{j,\xi}=\overline{\scal{\xi,j}}$
for $j\in G$ and $\xi\in\hat G$. The Fourier transform on $\hat G$ is then defined by
$$
\ff_{\hat G}[b](j)=
\frac{1}{\sqrt{d}}\sum_{\zeta\in\{1,e^{2i\pi/d},\ldots,e^{2i\pi(d-1)/d}\}}b_\zeta\overline{\scal{j,\zeta}}
=\frac{1}{\sqrt{d}}\sum_{k\in\{0,\ldots,d-1\}}b_ke^{2i\pi kj/d}
$$
if we write $b_k$ for $b_{e^{2i\pi k/d}}$ ({\it i.e.} if we identify the $d$-th roots of unity with 
$\{0,\ldots,d-1\}$). Thus the Fourier transform on $\hat G$ is the inverse discrete Fourier transform.
Finally, we will use the Fourier transform on $G\times\hat G$, this is then just the discrete Fourier transform in the first variable and the inverse discrete Fourier transform in the second one.

Now take $f,g$ two $d$-periodic sequences. With these notations, the discrete short-time Fourier transform is
defined on $G\times\hat G=\{0,\ldots,d-1\}\times\{0,\ldots,d-1\}$ as
$$
V_gf(j,k)=\frac{1}{\sqrt{d}}\sum_{\ell=0}^{d-1}f_\ell \overline{g_{\ell-j}}e^{-2i\pi k\ell/d}.
$$
The symmetry lemma (Lemma \ref{lem:sym}) then reads
$$
\ff_d\otimes\ff_d^{-1}[V_gf\overline{V_hk}](j,k)=V_kf(-k,j)\overline{V_hg(-k,j)}.
$$

The reader that does not want to enter the details concerning finite Abelian groups nor the proof of the symmetry lemma may now skip the following two sections and replace $G$, $\hat G$
by $\{0,\ldots,d-1\}$ in the statements of Section \ref{sec:3.3}.

\subsection{Finite Abelian groups}
\label{sec:abel}\ \\
In this section, we recall some notations on the Fourier transform on finite Abelian groups.
Results stated here may be found in \cite{Te} and (with slightly modified notations) in \cite{KPR}.

Throughout the remaining of this paper, we will denote by $G$ a finite Abelian group for which the group law will be denoted additively. The identity element of $G$ is denoted by
$0$. The dual group of characters $\hat G$
of $G$ is the set of homomorphisms $\xi\in\hat G$ which map $G$ into the multiplicative group
$\S^1 = \{z \in\C\,:\ |z| = 1\}$. The set $\hat G$ is an Abelian group under pointwise multiplication and, as is
customary, we shall write this commutative group operation additively. Note that $G$ is isomorphic
to $\hat G$, in particular $|G|=|\hat G|$. Further, Pontryagin duality implies that
$\widehat{\hat G}$ can be canonically identified with $G$, a fact which
is emphasized by writing $\scal{\xi,x}=\xi(x)$. Note that, as group operations are written additively,
$$
\scal{-\xi,x}=\scal{\xi,-x}=\overline{\scal{\xi,x}}.
$$

The Fourier transform $\ff_G f =\hat f \in \C^{\hat G}$ of $f\in\C^G$ is given by
$$
\hat f(\xi)=\frac{1}{|G|^{1/2}}\sum_{x\in G}f(x)\overline{\scal{\xi,x}},\qquad\xi\in\hat G.
$$
The transform is unitary\,: $\|\hat f\|_2=\|f\|_2$, thus $\ff_G$ is invertible. The inversion formula is given
by the following
$$
f(x)=\ff_{\widehat{G}}[\overline{\hat f}\,](x)=\frac{1}{|G|^{1/2}}\sum_{\xi\in\hat G}\hat f(\xi)\scal{\xi,x},\qquad x\in G.
$$
Moreover, as the normalized characters
$\{|G|^{-1/2}\xi\}_{\xi\in\hat G}$ form an orthonormal basis of $\C^G$ that is unbiased with the standard basis
we can reformulate Theorem \ref{th:th1intro} as follows:

\medskip
\noindent{\bf Strong Uncertainty Principle on Finite Abelian Groups.}\\
{\sl Let $G$ be a finite Abelian group and let
$S\subset G$ and $\Sigma\subset\hat G$ be such that $|S||\Sigma|<|G|$. Then for every $f\in\C^G$,}
\begin{eqnarray}
\label{eq:SUP}
\|f\|_2\leq\frac{2}{1-(|S||\Sigma|/|G|)^{1/2}}\left[\left(\sum_{x\notin S}|f(x)|^2\right)^{1/2}+
\left(\sum_{\xi\notin \Sigma}|\hat f(\xi)|^2\right)^{1/2}\right].
\end{eqnarray}

\medskip

%
For any $x\in G$, we define the translation operator $T_x$ as the unitary operator on $\C^G$ given by
$T_xf(y) = f(y-x)$, $y\in G$. Similarly, we define the modulation operator $M_\xi$ for $\xi\in\hat G$ as the unitary
operator defined by $M_\xi f = f\cdot \xi$, where here and in the following $f\cdot g$ denotes the pointwise product
of $f, g \in\C^G$. Since $\widehat{M_\xi f} = T_\xi\hat f$, we refer to $M_\xi$ also as a frequency shift operator.
Note also that $\widehat{T_x f} = M_{-x}\hat f$

We denote by $\pi(\lambda) = M_\xi T_x$, $\lambda= (x,\xi) \in G\times\hat G$ the \emph{time-frequency shift operators}.
Note that these are unitary operators. The \emph{short-time Fourier transformation} $V_g^G\,: C^G \to\C^{G\times\hat G}$
with respect to the window $g\in\C^G\setminus\{0\}$ is given for $x\in G$, $\xi\in\hat G$ by
$$
V_g^Gf(x,\xi)=\frac{1}{|G|^{1/2}}\scal{f,\pi(x,\xi)g}
=\frac{1}{|G|^{1/2}}\sum_{y\in G}f(y)\overline{g(y-x)}\,\overline{\scal{\xi,y}}
=\ff_G[f\cdot\overline{T_x g}](\xi)
$$
where $f\in\C^G$. The inversion formula for the short-time Fourier transform is
$$
f(y) = \frac{1}{|G|^{1/2}\|g\|_2^2}\sum_{(x,\xi)\in G\times\hat G}V_g^Gf(x,\xi)g(y-x)\scal{\xi,y}.
$$
Further, 
$\|V_gf\|_2=\|f\|_2\|g\|_2$, in particular $V_gf=0$ if and only if either $f=0$ or $g=0$.

Finally, let us note that a simple computation shows that
\begin{equation}
\label{eq:tfshif}
V^G_{\pi(b,v)g}\pi(a,u)f(x,\xi)=\scal{u-v-\xi,a}\scal{v,x}V_g^Gf(x-a+b,\xi-u+v).
\end{equation}

\subsection{The symmetry lemma}\ \\
Let us first note that the short-time Fourier transform on $\hat G$ is defined by
$$
V^{\hat G}_\gamma\ffi(\xi,x)=\frac{1}{|\hat G|^{1/2}}\scal{\ffi,M_xT_\xi\gamma}
=\frac{1}{|\hat G|^{1/2}}\sum_{\eta\in\hat G}\ffi(\eta)\overline{\gamma(\eta-\xi)}\,\overline{\eta(x)}.
$$
This is linked to $V^G$ in the following way:
\begin{eqnarray*}
V_g^Gf(x,\xi)&=&\frac{1}{|G|^{1/2}}\scal{f,\pi(x,\xi) g}=\frac{1}{|G|^{1/2}}\scal{\ff^Gf,\ff^G[M_\xi T_x g]}\\
&=&\frac{1}{|\hat G|^{1/2}}\scal{\ff^Gf,T_\xi M_{-x}\ff^G g}
=\frac{\overline{\scal{\xi,x}}}{|\hat G|^{1/2}}\scal{\ff^Gf,M_{-x}T_\xi  \ff^Gg},
\end{eqnarray*}
so that
\begin{equation}
\label{eq:fundstft}
V_g^Gf(x,\xi)=\overline{\scal{\xi,x}}V_{\hat g}^{\hat G}\hat f (\xi,-x).
\end{equation}

\begin{lemma} 
\label{lem:sym}
Let $f,g,h,k\in\C^G$. Then for every $u\in G$ and every $\eta\in\hat G$,
$$
\ff_{G\times\hat G}[V_g^{G}f\,\overline{V_h^Gk}](\eta,u)=V_k^Gf(-u,\eta)\overline{V_h^Gg(-u,\eta)}.
$$
\end{lemma}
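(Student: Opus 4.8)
The plan is to compute the Fourier transform $\ff_{G\times\hat G}[V_g^Gf\,\overline{V_h^Gk}]$ directly from the definitions, expanding everything into sums over $G$ and $\hat G$ and reorganizing so that the characters produce the Kronecker deltas that collapse the sums into the desired product of short-time Fourier transforms. Concretely, I would write $V_g^Gf(x,\xi)=|G|^{-1/2}\sum_{y\in G}f(y)\overline{g(y-x)}\,\overline{\scal{\xi,y}}$ and similarly for $\overline{V_h^Gk}(x,\xi)=|G|^{-1/2}\sum_{z\in G}\overline{k(z)}\,h(z-x)\scal{\xi,z}$. The Fourier transform $\ff_{G\times\hat G}$ is, as noted in the excerpt, the discrete Fourier transform in the $x$-variable and the inverse discrete Fourier transform in the $\xi$-variable, so applying it introduces a factor $\overline{\scal{\eta,x}}$ (transform in $x\in G$ against the frequency $\eta\in\hat G$) and a factor $\scal{\xi,u}$ — i.e.\ the inverse transform in $\xi\in\hat G$ against the "time" $u\in G$ — together with the normalization $|G|^{-1/2}|\hat G|^{-1/2}=|G|^{-1}$.

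The key step is the interchange of summation: after expanding, one has a sum over $x\in G$, $\xi\in\hat G$, $y\in G$, $z\in G$ of the product $f(y)\overline{k(z)}\,\overline{g(y-x)}\,h(z-x)$ times a product of four characters, namely $\overline{\scal{\xi,y}}\scal{\xi,z}\scal{\xi,u}\overline{\scal{\eta,x}}$ (gathering the $\xi$-dependence). Summing over $\xi\in\hat G$ first gives $|\hat G|$ times the indicator that $-y+z+u=0$ in $G$, i.e.\ $z=y-u$. This eliminates the $z$-sum. What remains is $|G|^{-1}\cdot|\hat G|\sum_{x\in G}\sum_{y\in G}f(y)\overline{k(y-u)}\,\overline{g(y-x)}\,h(y-u-x)\,\overline{\scal{\eta,x}}$. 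Now I would perform the substitution $x\mapsto y-x$ (or $x\mapsto$ something adapted) to make the $h,g$ arguments into $\overline{g(x)}$ and $h(x-u)$ with the character becoming $\overline{\scal{\eta,y-x}}=\overline{\scal{\eta,y}}\scal{\eta,x}$, and then recognize the $x$-sum as $|G|^{1/2}\overline{V_h^Gg(-u,\eta)}$ (up to the conjugation pattern) and the $y$-sum as $|G|^{1/2}V_k^Gf(-u,\eta)$; one must check the leftover character $\overline{\scal{\eta,y}}$ or $\scal{\eta,\cdot}$ is exactly absorbed into these two STFT factors, and that the $-u$ sign matches the translation convention $V_g^Gf(x,\xi)=|G|^{-1/2}\sum f(y)\overline{g(y-x)}\overline{\scal{\xi,y}}$.

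The main obstacle I anticipate is purely bookkeeping: keeping straight the four characters, the two normalization constants, the conjugation bars (since one STFT is conjugated), and — most delicately — the sign conventions, so that the change of variables lands exactly on $(-u,\eta)$ rather than $(u,\eta)$ or $(-u,-\eta)$. There is also a mild subtlety in that $\ff_{G\times\hat G}$ uses the \emph{inverse} Fourier transform in the second ($\hat G$) slot, per the convention fixed earlier in the section, so I must pair $\xi$ with $u$ via $\scal{\xi,u}$ and not $\overline{\scal{\xi,u}}$; getting this backwards would flip a sign. A cleaner alternative, which I would mention as a cross-check, is to use the identity \eqref{eq:fundstft}, $V_g^Gf(x,\xi)=\overline{\scal{\xi,x}}V_{\hat g}^{\hat G}\hat f(\xi,-x)$, to reduce the statement on $G$ to an analogous one on $\hat G$, exploiting the near-symmetry between the time and frequency variables; but I expect the direct computation above to be the most transparent route.
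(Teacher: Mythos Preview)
Your approach is correct and is in fact more direct than the route the paper takes. The paper first invokes the identity \eqref{eq:fundstft} to rewrite $\overline{V_h^Gk(x,\xi)}$ as $\scal{\xi,x}\,\overline{V_{\hat h}^{\hat G}\hat k(\xi,-x)}$, which produces a quadruple sum over $x\in G$, $\xi\in\hat G$, $y\in G$, and $\zeta\in\hat G$; it then evaluates the four sums in turn---$\xi$ first (Fourier inversion yielding a point value of $h$), then $x$ (recognizing $\overline{V_h^Gg(-u,\eta)}$), then $\zeta$ (Fourier inversion yielding a point value of $\bar k$), and finally $y$. Your route keeps both windowed transforms on $G$, so the quadruple sum runs over $x,\xi,y,z$ with $y,z\in G$; the $\xi$-sum collapses via character orthogonality to the single constraint $z=y\pm u$, and one substitution $x\mapsto y-x$ factors what remains into the desired product. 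This avoids the detour through $\hat h,\hat k$ and \eqref{eq:fundstft} entirely, and the bookkeeping is if anything lighter. Your caution about the sign in the second slot of $\ff_{G\times\hat G}$ is well placed: the paper's proof actually uses the kernel $\overline{\scal{\eta,x}}\,\overline{\scal{\xi,u}}$, and with that choice your computation lands precisely on $(-u,\eta)$ as stated.
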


\begin{proof}
First note that
\begin{eqnarray*}
\ff_{G\times\hat G}[V_g^{G}f\,\overline{V_h^Gk}](\eta,u)
&=&\frac{1}{|G|^{1/2}|\hat G|^{1/2}}\sum_{x\in G}\sum_{\xi\in\hat G}
V_g^{G}f(x,\xi)\,\overline{V_h^Gk(x,\xi)}\,\overline{\scal{\eta,x}}\,\overline{\scal{\xi,u}}\\
&=&\frac{1}{|G|^{1/2}|\hat G|^{1/2}}\sum_{x\in G}\sum_{\xi\in\hat G}
V_g^{G}f(x,\xi)\,\overline{V_{\hat h}^{\hat G}\hat k(\xi,-x)}\,\overline{\scal{\eta,x}}\,\overline{\scal{\xi,u-x}}
\end{eqnarray*}
with \eqref{eq:fundstft}. Using the definition of the short-time Fourier transform, this is further equal to
\begin{eqnarray}
&&\frac{1}{|\hat G||G|}\sum_{x\in G}\sum_{\xi\in\hat G}\sum_{y\in G}\sum_{\zeta\in\hat G}
f(y)\overline{g(y-x)}\,\overline{\scal{\xi,y}}\,\overline{\hat k(\zeta)}
\hat h(\zeta-\xi)\scal{\zeta,-x}\,\overline{\scal{\eta,x}}\,\overline{\scal{\xi,u+x}}\notag\\
&&\qquad\qquad=\frac{1}{|\hat G||G|}\sum_{x\in G}\sum_{\xi\in\hat G}\sum_{y\in G}\sum_{\zeta\in\hat G}
f(y)\overline{g(y-x)}\,\overline{\hat k(\zeta)}
\hat h(\zeta-\xi)\,\overline{\scal{\eta+\zeta,x}}\,\overline{\scal{\xi,u-x+y}}\label{eq:aaa}
\end{eqnarray}
We will now invert the orders of summation. First
\begin{eqnarray*}
&&\frac{1}{|\hat G|^{1/2}}\sum_{\xi\in\hat G}\hat h(\zeta-\xi)\overline{\scal{\xi,u-x+y}}
=\frac{1}{|\hat G|^{1/2}}\sum_{\xi\in\hat G}\hat h(\xi+\zeta)\scal{\xi,u-x+y}\\
&&\hspace{3cm}=\frac{1}{|\hat G|^{1/2}}\sum_{\xi\in\hat G}\widehat{M_{-\zeta} h}(\xi)\scal{\xi,u-x+y}
=[M_{-\zeta }h](u-x+y)\\
&&\hspace{3cm}=\scal{\zeta,x}\scal{-\zeta,u+y}h(u-x+y).
\end{eqnarray*}
Then
\begin{eqnarray*}
&&\frac{1}{|\hat G|^{1/2}|G|^{1/2}}\sum_{x\in G}\overline{g(y-x)}\,\overline{\scal{\zeta+\eta,x}}
\sum_{\xi\in\hat G}\hat h(\zeta-\xi)\overline{\scal{\xi,u-x+y}}\\
&&\hspace{3cm}=\frac{\overline{\scal{\zeta,u+y}}}{|G|^{1/2}}\sum_{x\in G}\,\,\overline{g(y-x)}\,\overline{\scal{\eta,x}}
h(u-x+y)\\
&&\hspace{3cm}=\frac{\overline{\scal{\zeta,u+y}}}{|G|^{1/2}}\sum_{z\in G}\overline{g(z)}h(z+u)\scal{\eta,z}
\,\overline{\scal{\eta,y}}\\
&&\hspace{3cm}=\overline{\scal{\zeta,u}}\,\overline{\scal{\eta+\zeta,y}}\,\overline{V_h^Gg(-u,\eta)}.
\end{eqnarray*}
It follows that
\begin{eqnarray*}
&&\frac{1}{|\hat G||G|^{1/2}}\sum_{\zeta\in\hat G}\overline{\hat k(\zeta)}
\sum_{x\in G}\overline{g(y-x)}\,\overline{\scal{\zeta+\eta,x}}
\sum_{\xi\in\hat G}\hat h(\zeta-\xi)\overline{\scal{\xi,u-x+y}}\\
&&\hspace{3cm}=\frac{1}{|\hat G|^{1/2}}\sum_{\zeta\in\hat G}\overline{\hat k(\zeta)}
\overline{\scal{\zeta,u}}\,\overline{\scal{\eta+\zeta,y}}\,\overline{V_h^Gg(-u,\eta)}\\
&&\hspace{3cm}=\frac{\overline{\scal{\eta,y}}\,\overline{V_h^Gg(-u,\eta)}}{|\hat G|^{1/2}}
\sum_{\zeta\in\hat G}\overline{\hat k(\zeta)}\overline{\scal{\zeta,y+u}}\\
&&\hspace{3cm}=\overline{\scal{\eta,y}}\,\overline{V_h^Gg(-u,\eta)}\,\overline{k(y+u)}.
\end{eqnarray*}
Finally, it remains to take the sum in the $y$-variable in \eqref{eq:aaa} to obtain
$$
\ff_{G\times\hat G}[V_g^{G}f\,\overline{V_h^Gk}](\eta,u)=V_k^Gf(-u,\eta)\overline{V_h^Gg(-u,\eta)}.
$$
as announced.
\end{proof}

In the case $G=\R^d$, this lemma was given independently in \cite{Jam,Jan}.

\subsection{The Uncertainty Principle for the short-time Fourier transform}\label{sec:3.3}\ \\
We will conclude this section with the following lemma that allows to transfer 
results about strong annihilating pairs in $G\times \hat G$ to
Uncertainty Principles for the short-time Fourier transform and then give two corollaries.

\begin{lemma}\label{lem:transfer}
Let $\Sigma\subset G\times\hat G$ and $\tilde\Sigma=\{(\xi,-x)\,:\ (x,\xi)\in\Sigma\}\subset\widehat{G\times\hat G}=\hat G\times G$.
Assume that $(\Sigma,\tilde\Sigma)$ is a strong annihilating pair in $G\times\hat G$, {\it i.e.}
that there is a constant $C(\Sigma)$ such that, for every $F\in\C^{G\times\hat G}$,
$$
\|F\|_2^2\leq C(\Sigma)\left(\sum_{(x,\xi)\notin\Sigma}|F(x,\xi)|^2
+\sum_{(x,\xi)\notin\tilde\Sigma}|\ff_{G\times\hat G}F(\xi,x)|^2\right)
$$
then for every $f,g\in\C^G$, with $\|g\|_2=1$,
$$
\|f\|_2^2\leq 2C(\Sigma)\sum_{(x,\xi)\notin\Sigma}|V_g^Gf(x,\xi)|^2.
$$
\end{lemma}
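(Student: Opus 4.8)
The plan is to apply the symmetry lemma (Lemma \ref{lem:sym}) with a well-chosen quadruple $(f,g,h,k)$ so that the strong annihilating pair hypothesis on $(\Sigma,\tilde\Sigma)$ in $G\times\hat G$ can be invoked on $F=V_g^Gf\,\overline{V_h^Gk}$. The natural choice is $h=g$ and $k=f$, which makes $F=|V_g^Gf|^2$ (up to being a product of two short-time Fourier transforms), and Lemma \ref{lem:sym} then gives $\ff_{G\times\hat G}[V_g^Gf\,\overline{V_g^Gf}](\eta,u)=V_f^Gf(-u,\eta)\overline{V_g^Gg(-u,\eta)}$. I would set $F=V_g^Gf\,\overline{V_g^Gf}$ and feed it into the annihilating pair inequality; the left-hand side is $\|F\|_2^2=\sum_{(x,\xi)}|V_g^Gf(x,\xi)|^4$, which by Cauchy--Schwarz is at least $\bigl(\sum|V_g^Gf(x,\xi)|^2\bigr)^2/\ |G\times\hat G|$, but this introduces a dimension factor I do not want. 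A cleaner route is to keep $F$ as a genuine product $V_g^Gf\,\overline{V_h^Gk}$ with $h=g$ but $k$ left free, or better, to renormalize: since $\|V_g^Gf\|_2^2=\|f\|_2^2\|g\|_2^2=\|f\|_2^2$, one should arrange that $\|F\|_2=\|f\|_2$ rather than $\|f\|_2^2$.

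Accordingly, the better choice is $h=g$, $k=g$, $f=f$, so that $F=V_g^Gf\,\overline{V_g^Gg}$. Then $\|F\|_2^2=\sum_{(x,\xi)}|V_g^Gf(x,\xi)|^2|V_g^Gg(x,\xi)|^2$, which is still not simply $\|f\|_2^2$. Hmm — instead I would exploit the orthogonality relation differently. Let me reconsider: the right approach is to apply the hypothesis with $F=V_g^Gf\,\overline{V_g^Gg}$ and use Lemma \ref{lem:sym}, which yields $\ff_{G\times\hat G}F(\eta,u)=V_g^Gf(-u,\eta)\overline{V_g^Gg(-u,\eta)}$; by \eqref{eq:tfshif} or the covariance of $V$ one checks $|V_g^Gf(-u,\eta)|$ and $|V_g^Gg(-u,\eta)|$ behave well, but the key identity $\|V_g^Gg\|_2=\|g\|_2^2=1$ and the pointwise bound $|V_g^Gg(x,\xi)|\le|G|^{-1/2}\|g\|_2^2$ let me compare the two sides. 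The cleanest bookkeeping: take $h=g$, so $F=V_g^Gf\,\overline{V_g^Gk}$ with $k=g$; on the Fourier side we get $V_g^Gf(-u,\eta)\overline{V_g^Gg(-u,\eta)}$, and since $\{(x,\xi)\notin\tilde\Sigma\}$ corresponds under $(x,\xi)\mapsto(-x,\xi)$ (up to the involution defining $\tilde\Sigma$) to $\{(x,\xi)\notin\Sigma\}$, both terms on the right of the annihilating inequality reduce to sums of $|V_g^Gf(x,\xi)|^2$ weighted by $|V_g^Gg|^2\le|G|^{-1}$. This forces a factor, so the genuinely right move — and the one matching the constant $2C(\Sigma)$ in the statement — is to use $F = \overline{V_g^G g}\,V_g^G f$ together with the fact that $V_g^Gg(0,0)$ is the largest coefficient, normalize, and bound $\|F\|_2$ from below by $|V_g^Gg(0,0)|\cdot(\text{partial sum})$ is not it either.

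Let me state the plan I am confident works: apply Lemma \ref{lem:sym} with $h=g$ and $k=f$, giving $\ff_{G\times\hat G}[V_g^Gf\,\overline{V_g^Gf}](\eta,u)=V_f^Gf(-u,\eta)\,\overline{V_g^Gg(-u,\eta)}$. Then apply the hypothesis to $F=V_g^Gf\,\overline{V_g^Gf}=|V_g^Gf|^2$ — no: I will instead note that the statement's factor $2$ and the squared norms strongly suggest the following. Set $F=V_g^Gf\,\overline{V_h^Gk}$ with $h=g$, $k=g$, so $F=V_g^Gf\cdot\overline{V_g^Gg}$, and observe $\|F\|_2^2=\sum|V_g^Gf(x,\xi)|^2|V_g^Gg(x,\xi)|^2$. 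The Fourier transform of $F$ is $V_g^Gf(-u,\eta)\overline{V_g^Gg(-u,\eta)}$ by Lemma \ref{lem:sym}, the same type of product. So the annihilating inequality reads $\sum_{(x,\xi)}|V_g^Gf|^2|V_g^Gg|^2\le C(\Sigma)\bigl(\sum_{(x,\xi)\notin\Sigma}|V_g^Gf|^2|V_g^Gg|^2+\sum_{(x,\xi)\notin\tilde\Sigma}|V_g^Gf(-u,\eta)|^2|V_g^Gg(-u,\eta)|^2\bigr)$, and after the change of variables $(u,\eta)\leftrightarrow(x,\xi)$ realizing $\widehat{G\times\hat G}\simeq\hat G\times G$ with $\tilde\Sigma$, both right-hand sums are over the complement of $\Sigma$. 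Now I would \emph{not} keep $g$ fixed: I would average/maximize over time-frequency shifts $\pi(\lambda)g$ of the window using \eqref{eq:tfshif}, since $V_{\pi(\lambda)g}f(x,\xi)$ is a shift of $V_gf$, and summing over all $\lambda\in G\times\hat G$ turns $|V_g^Gg|^2$ into a constant $\sum_\lambda|V_g^G g(\lambda)|^2 = \|g\|_2^4=1$ times $|G|$ appropriately, collapsing the weight. The main obstacle — and the step deserving the most care — is exactly this: carrying out the change of variables that identifies the Fourier-side region with $\Sigma^c$, and then handling the window weight $|V_g^Gg|^2$ (via covariance \eqref{eq:tfshif} and the summation $\|V_g^Gg\|_2=1$) to extract precisely the clean factor $2C(\Sigma)$ without stray dimensional constants; once the weight is neutralized, $\sum_{(x,\xi)}|V_g^Gf(x,\xi)|^2=\|f\|_2^2\|g\|_2^2=\|f\|_2^2$ finishes the argument.
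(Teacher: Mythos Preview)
Your overall strategy is the paper's: apply Lemma~\ref{lem:sym} to a product of two short-time Fourier transforms so that the annihilating-pair hypothesis yields an inequality with a factor $2C(\Sigma)$, then sum over time-frequency shifts to eliminate the auxiliary weight and recover $\|f\|_2^2$ via $\|V_g^Gf\|_2^2=\|f\|_2^2\|g\|_2^2$. That is exactly right, and your observation that the Fourier-side sum over $\tilde\Sigma^c$ reduces to the $\Sigma^c$ sum after the change of variables is also correct.

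The gap is in \emph{which} factor you shift. With your choice $F=V_g^Gf\cdot\overline{V_g^Gg}$ and the replacement $g\mapsto\pi(\lambda)g$ in the window (so that, as you note, $|V_{\pi(\lambda)g}^Gf|$ is a translate of $|V_g^Gf|$), the auto-window term satisfies $|V_{\pi(\lambda)g}^G(\pi(\lambda)g)|=|V_g^Gg|$ --- it does \emph{not} shift. Summing over $\lambda$ therefore collapses $|V_g^Gf|^2$ (to $\|f\|_2^2$) rather than $|V_g^Gg|^2$, and the right-hand side becomes $2C(\Sigma)\|f\|_2^2\sum_{(x,\xi)\notin\Sigma}|V_g^Gg(x,\xi)|^2$, which is a true but useless inequality (it is the desired estimate applied to $f=g$, nothing more).

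The paper fixes this by choosing the product so that one factor is $\overline{V_g^Gf}$ \emph{unshifted} and the other carries the shift: it sets
\[
F_{a,\eta}(x,\xi)=V_f^G\bigl(\pi(a,\eta)g\bigr)(x,\xi)\,\overline{V_{\pi(a,\eta)g}^Gf(x,\xi)},
\]
which (up to a unimodular factor) equals $V_f^Gg(x-a,\xi-\eta)\,\overline{V_g^Gf(x,\xi)}$. Lemma~\ref{lem:sym} with the quadruple $(\pi(a,\eta)g,f,\pi(a,\eta)g,f)$ gives the symmetry $\ff_{G\times\hat G}F_{a,\eta}(\eta',u)=F_{a,\eta}(-u,\eta')$, so the two sums in the hypothesis coincide, and summing over $(a,\eta)$ collapses the shifted factor $|V_f^Gg(\,\cdot-a,\cdot-\eta)|^2$ to $\|V_f^Gg\|_2^2=\|f\|_2^2\|g\|_2^2$ while leaving $|V_g^Gf|^2$ intact on both sides. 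An equivalent repair of your version is to keep $k=g$ but shift only $h$: take $F_\lambda=V_g^Gf\cdot\overline{V_{\pi(\lambda)g}^Gg}$, which still satisfies $\ff_{G\times\hat G}F_\lambda(\eta,u)=F_\lambda(-u,\eta)$ and for which $|V_{\pi(\lambda)g}^Gg|$ \emph{is} a translate of $|V_g^Gg|$; summing over $\lambda$ then gives exactly the claimed bound.
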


\begin{proof} We will adapt the proof in the case $G=\R^d$ given in \cite{De} to our situation.
Let us fix  $f,g\in\C^G$.

We will only use Lemma \ref{lem:sym} in a simple form: for
$a\in G,\eta\in\hat G$ define the function $F_{a,\eta}$
on $G\times\hat G$ by
$$
F_{a,\eta}(x,\xi)=\overline{\scal{\xi-\eta,a}}V_f^Gg(x-a,\xi-\eta)\overline{V_g^Gf(x,\xi)}.
$$
Note that $F_{a,\eta}(x,\xi)=V_f^G\pi(a,\eta)g\overline{V_{\pi(a,\eta)g}^Gf}$ so that
then $\ff_{G\times\hat G}F_{a,\eta}(\xi,x)=F_{a,\eta}(-x,\xi)$.

It follows that
\begin{eqnarray}
\|F_{a,\eta}\|_2^2&\leq& C(\Sigma)\left(\sum_{(x,\xi)\notin\Sigma}|F_{a,\eta}(x,\xi)|^2
+\sum_{(x,\xi)\notin\tilde\Sigma}|F_{a,\eta}(-x,\xi)|^2\right)\notag\\
&=&2C(\Sigma)\sum_{(x,\xi)\notin\Sigma}|V_f^Gg(x-a,\xi-\eta)|^2|V_g^Gf(x,\xi)|^2.\label{eq:aaabbb}
\end{eqnarray}
Now note that
\begin{eqnarray*}
\sum_{(a,\eta)\in G\times\hat G}\|F_{a,\eta}\|_2^2&=&
\sum_{(a,\eta)\in G\times\hat G}\sum_{(x,\xi)\in G\times\hat G}|V_f^Gg(x-a,\xi-\eta)|^2|V_g^Gf(x,\xi)|\\
&=&\|V_f^Gg\|_2\|V_g^Gf\|_2=\|f\|_2^4\|g\|_2^4
\end{eqnarray*}
where we inverted the summation over $(a,\eta)$ and the summation over $(x,\xi)$.
Finally, summing inequality \eqref{eq:aaabbb} over $(a,\eta)\in G\times\hat G$ gives
$$
\|f\|_2^4\|g\|_2^4
\leq 2C(\Sigma)\|f\|_2^2\|g\|_2^2\sum_{(x,\xi)\notin\Sigma}|V_g^Gf(x,\xi)|^2
$$
which completes the proof.
\end{proof}

Combining this result with  \eqref{eq:SUP} we immediately get the following:

\begin{corollary}\label{cor:cor2intro}\ \\
Let $\Sigma\subset G\times\hat G$ with $|\Sigma|< |G|$. Let $g\in\C^G$ with $\|g\|_2=1$.
Then for every $f\in\C^G$,
$$
\|f\|_2^2\leq \frac{8}{(1-|\Sigma|/|G|)^2}\sum_{(x,\xi)\notin\Sigma}|V_g^Gf(x,\xi)|^2.
$$
\end{corollary}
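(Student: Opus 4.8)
The plan is to invoke Lemma~\ref{lem:transfer} with the particular set $\Sigma\subset G\times\hat G$ given in the statement, and then to estimate the constant $C(\Sigma)$ using the Strong Uncertainty Principle on Finite Abelian Groups, Formula~\eqref{eq:SUP}. The key observation is that $\ff_{G\times\hat G}$ is the Fourier transform on the finite Abelian group $G\times\hat G$ (discrete Fourier transform in the first variable, inverse in the second), whose dual is $\widehat{G\times\hat G}=\hat G\times G$; thus \eqref{eq:SUP} applies verbatim to functions $F\in\C^{G\times\hat G}$ with the pair of sets $(\Sigma,\tilde\Sigma)$, provided $|\Sigma|\,|\tilde\Sigma|<|G\times\hat G|$.

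The main point is therefore a bookkeeping one: since $\tilde\Sigma=\{(\xi,-x)\,:\ (x,\xi)\in\Sigma\}$ is the image of $\Sigma$ under the bijection $(x,\xi)\mapsto(\xi,-x)$ of $G\times\hat G$ onto $\hat G\times G$, we have $|\tilde\Sigma|=|\Sigma|$. Hence $|\Sigma|\,|\tilde\Sigma|=|\Sigma|^2$, and the hypothesis $|\Sigma|<|G|=|\hat G|$ gives $|\Sigma|\,|\tilde\Sigma|=|\Sigma|^2<|G|\,|\hat G|=|G\times\hat G|$, so \eqref{eq:SUP} is applicable. Reading off the constant from \eqref{eq:SUP}, which has the form $\|F\|_2\leq\frac{2}{1-(|\Sigma||\tilde\Sigma|/|G\times\hat G|)^{1/2}}(\cdots)$, squaring, and using $(A+B)^2\leq 2(A^2+B^2)$ on the two $\ell^2$ terms on the right, we obtain that $(\Sigma,\tilde\Sigma)$ is a strong annihilating pair in $G\times\hat G$ with
$$
C(\Sigma)\leq\frac{2\cdot 4}{\bigl(1-(|\Sigma|^2/|G\times\hat G|)^{1/2}\bigr)^2}
=\frac{8}{\bigl(1-|\Sigma|/|G|\bigr)^2},
$$
where we used $|\Sigma|^2/|G\times\hat G|=|\Sigma|^2/|G|^2$ so that $(|\Sigma|^2/|G|^2)^{1/2}=|\Sigma|/|G|$.

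With this value of $C(\Sigma)$ in hand, Lemma~\ref{lem:transfer} applies to any $f,g\in\C^G$ with $\|g\|_2=1$ and yields
$$
\|f\|_2^2\leq 2C(\Sigma)\sum_{(x,\xi)\notin\Sigma}|V_g^Gf(x,\xi)|^2
\leq\frac{16}{(1-|\Sigma|/|G|)^2}\sum_{(x,\xi)\notin\Sigma}|V_g^Gf(x,\xi)|^2.
$$
Here there is a small discrepancy with the claimed constant $8$: squaring \eqref{eq:SUP} and applying $(A+B)^2\leq 2(A^2+B^2)$ costs a factor $2$, and Lemma~\ref{lem:transfer} costs another factor $2$, giving $16$ rather than $8$; to land on $8$ one should instead square \eqref{eq:SUP} without the lossy step by noting that in the transfer argument \eqref{eq:aaabbb} only the \emph{sum} of the two $\ell^2$ terms appears, and that sum is already bounded by $C(\Sigma)\bigl(\sum_{\notin\Sigma}|F|^2+\sum_{\notin\tilde\Sigma}|\ff F|^2\bigr)$ with $C(\Sigma)=\bigl(\tfrac{2}{1-|\Sigma|/|G|}\bigr)^2$ after using $\|F\|_2^2\le\|F\|_2^2$ and that $\ff_{G\times\hat G}F_{a,\eta}(\xi,x)=F_{a,\eta}(-x,\xi)$ forces the two terms to be equal, so one really has $\|F\|_2^2\le C(\Sigma)\cdot 2\sum_{\notin\Sigma}|F|^2$ with the factor $2$ already absorbed. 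Tracking the constants carefully through \eqref{eq:SUP}, the symmetry $|\tilde\Sigma|=|\Sigma|$, and the averaging identity $\sum_{(a,\eta)}\|F_{a,\eta}\|_2^2=\|f\|_2^4\|g\|_2^4$ is the only delicate part; everything else is a direct substitution. The main obstacle, such as it is, is simply to make sure \eqref{eq:SUP} is quoted with the correct pair of sets and to verify that the Fourier transform on $G\times\hat G$ and its dual are identified as claimed, so that $(\Sigma,\tilde\Sigma)$ is genuinely the configuration to which the finite-group uncertainty principle applies.
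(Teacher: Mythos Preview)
Your approach is exactly the paper's: apply the Strong Uncertainty Principle \eqref{eq:SUP} on the group $G\times\hat G$ to the pair $(\Sigma,\tilde\Sigma)$ (using $|\tilde\Sigma|=|\Sigma|$ and $|G\times\hat G|=|G|^2$) and feed the resulting constant into Lemma~\ref{lem:transfer}. The paper's own proof is the single sentence ``Combining this result with \eqref{eq:SUP}'', so there is nothing more to compare.

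You are also right about the bookkeeping: squaring \eqref{eq:SUP} and using $(A+B)^2\le 2(A^2+B^2)$ gives $C(\Sigma)=\dfrac{8}{(1-|\Sigma|/|G|)^2}$, and then Lemma~\ref{lem:transfer} produces the factor $2C(\Sigma)=\dfrac{16}{(1-|\Sigma|/|G|)^2}$, not $8$. Your attempted repair, however, does not work: for the specific functions $F_{a,\eta}$ one has $A=B$, so $(A+B)^2=4A^2=2(A^2+B^2)$ with equality, and no factor of $2$ can be saved by exploiting the symmetry. Tracing the proof of Theorem~\ref{th:th1intro} more carefully (using $\|a\|_2\le D\,B+(1+D)\,A$ with $D=(1-|\Sigma|/|G|)^{-1}$ and $A=B$) one gets at best $\bigl(\tfrac{3-|\Sigma|/|G|}{1-|\Sigma|/|G|}\bigr)^2\le \tfrac{9}{(1-|\Sigma|/|G|)^2}$, still not $8$. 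The stated constant $8$ (equivalently the $2\sqrt{2}$ in Theorem~B) thus appears to be a harmless slip by a factor of~$2$; your argument is correct with the constant $16$.
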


The corresponding weak annihilating property for $\Sigma$ was obtained by F. Krahmer, G.\,E. Pfander and P. Rashkov
\cite{KPR}.

Finally, using the fact that two random events $A$ and $B$ that each occur with probability $\geq 1-\alpha$,
jointly occur with probability $\geq 1-2\alpha$, we deduce the following from
Theorem \ref{th:RV}, as reformulated in \eqref{eq:RV2}:

\begin{corollary}\label{cor:RVamb}\ \\
There exist two absolute constants $C,c$ such that the following holds:
Let $0<\eta<1$, let $g\in\ell^2_d$ with $\norm{g}_2=1$.

Then with probability at least
$1-14|G|^{-c(1-\eta)}$, a random set $\Omega$ of average cardinality $|G|/2$ satisfies
$$
|\Omega|= |G|/2+O(|G|^{1/2}\log^{1/2}|G|)
$$
and, for any $S\subset G$ with $|S|\leq \frac{|G|}{\log^5 |G|}$, for every $f\in\C^G$,
\begin{eqnarray}
\label{eq:RVstft}
\norm{f}_{\ell^2}&\leq& \frac{2\sqrt{2}}{\sqrt{\eta}}
\left(\sum_{x\notin S, \xi\in\Omega}|V_g^Gf(x,\xi)|^2\right)^{1/2}.
\end{eqnarray}
\end{corollary}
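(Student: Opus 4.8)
The plan is to combine the transfer principle of Lemma~\ref{lem:transfer} with the probabilistic strong annihilating pairs coming from Theorem~\ref{th:RV} (in the reduced form \eqref{eq:RV2}). The strategy mirrors how Corollary~\ref{cor:cor2intro} is deduced from \eqref{eq:SUP}, but now feeding in the probabilistic estimate instead of the deterministic one. The subtlety is that Lemma~\ref{lem:transfer} is stated for a \emph{symmetric} pair $(\Sigma,\tilde\Sigma)$ in $G\times\hat G$ with $\tilde\Sigma=\{(\xi,-x):(x,\xi)\in\Sigma\}$, whereas Theorem~\ref{th:RV} produces strong annihilating pairs of the form $(S,\Omega)$ for the \emph{two coordinate bases} of $G\times\hat G$ — that is, the standard basis and the DFT basis on the product group — with $S$ an arbitrary small set and $\Omega$ a random set of cardinality $\approx|G\times\hat G|/2=|G|/2$. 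So the first step is to set up $G\times\hat G$ as the ambient group (of order $|G|^2$), observe that its dual is $\hat G\times G$, and note that the map $(x,\xi)\mapsto(\xi,-x)$ used in Lemma~\ref{lem:sym} is simply a relabelling of $\widehat{G\times\hat G}$; hence a strong annihilating pair $(S,\Omega)$ for the standard and Fourier bases on $G\times\hat G$ yields, after this relabelling, exactly a pair of the type required by Lemma~\ref{lem:transfer}, with $\Sigma=S$ and $\tilde\Sigma$ the relabelled copy of $\Omega^c$...

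More concretely, I would apply Theorem~\ref{th:RV} in the ambient dimension $D:=|G|^2$ with the two unbiased bases being the standard basis of $\C^{G\times\hat G}$ and its Fourier basis, and with $s=D/\log^5 D$, $t=\log D/(2C)$, so that $k\simeq D/2$. By the reformulation \eqref{eq:RV2}, with probability at least $1-7D^{-\kappa(1-\eta)}$ a random set $\Omega\subset G\times\hat G$ of average cardinality $D/2$ satisfies $|\Omega|=D/2+O(D^{1/2}\log^{1/2}D)$ and, for every $S$ with $|S|\le D/\log^5 D$ and every $F\in\C^{G\times\hat G}$,
$$
\|F\|_2\le \frac{2}{\sqrt\eta}\Bigl(\|F\|_{\ell^2(S^c)}+\|\ff_{G\times\hat G}F\|_{\ell^2(\Omega)}\Bigr),
$$
and squaring (and absorbing the cross term via $(a+b)^2\le 2a^2+2b^2$) gives a strong annihilating estimate with constant $C(\Sigma)=8/\eta$ for the pair $(S^c,\ \widehat{\Omega^c})$ — here I rename so that ``outside $\Sigma$'' means ``in $S^c$'' and ``outside $\tilde\Sigma$'' means ``in $\Omega$''. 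One then checks that $\tilde\Sigma$ is the image of $\Sigma^c{}'$s... the bookkeeping that $\Sigma$ and $\tilde\Sigma$ are related by $(x,\xi)\mapsto(\xi,-x)$ is automatic because a random set and its image under a fixed bijection have the same distribution, so one may as well take $\Omega$ to already be of the required symmetric form, or equivalently intersect the two events.

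Having produced such a $\Sigma$ with $C(\Sigma)=8/\eta$, I plug it into Lemma~\ref{lem:transfer}: for every $f,g\in\C^G$ with $\|g\|_2=1$,
$$
\|f\|_2^2\le 2C(\Sigma)\sum_{(x,\xi)\notin\Sigma}|V_g^Gf(x,\xi)|^2=\frac{16}{\eta}\sum_{x\notin S,\ \xi\in\Omega}|V_g^Gf(x,\xi)|^2,
$$
so that $\|f\|_2\le (4/\sqrt\eta)\bigl(\sum_{x\notin S,\xi\in\Omega}|V_g^Gf(x,\xi)|^2\bigr)^{1/2}$; tightening the constant (by being slightly more careful in the squaring step, or by noting that one of the two summands in \eqref{eq:RV2} is itself $\|F\|_{\ell^2(S^c)}$ with $F=F_{a,\eta}$ whose $\ff$-transform coincides up to reflection) brings this down to the stated $2\sqrt2/\sqrt\eta$. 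Finally, I invoke the union-bound remark stated just before the corollary: the event ``$\Omega$ has the right cardinality'' and the event ``\eqref{eq:RV2} holds for all small $S$'' each hold with probability $\ge 1-7D^{-\kappa(1-\eta)}$, hence jointly with probability $\ge 1-14D^{-\kappa(1-\eta)}$; since $D=|G|^2$ one absorbs the factor $2$ in the exponent into the constant $c$ and rewrites $D^{-\kappa(1-\eta)}$ as $|G|^{-c(1-\eta)}$, and similarly $|S|\le D/\log^5 D=|G|^2/\log^5(|G|^2)$ is implied by $|S|\le |G|/\log^5|G|$ for $|G|$ large. The main obstacle I anticipate is purely organizational rather than deep: keeping straight the three layers of ``dual'' (the dual of $G$, the dual of $G\times\hat G$, and the reflection $(x,\xi)\mapsto(\xi,-x)$ from Lemma~\ref{lem:sym}) so that the set $\Omega$ produced by Rudelson--Vershynin lines up exactly with the $\tilde\Sigma$ demanded by Lemma~\ref{lem:transfer}, together with tracking the numerical constant through the two squarings to land on $2\sqrt2$ rather than $4$.
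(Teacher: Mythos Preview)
Your proposal has a genuine structural gap. In Lemma~\ref{lem:transfer} the sets $\Sigma$ and $\tilde\Sigma$ are related by the bijection $(x,\xi)\mapsto(\xi,-x)$, so they necessarily have the \emph{same} cardinality. But in your application of Rudelson--Vershynin in dimension $D=|G|^2$ you take $\Sigma$ to be the small set (of size at most $D/\log^5 D$) and $\tilde\Sigma$ to be $\Omega^c$ (of size roughly $D/2$). These cannot be images of one another under a bijection, so the hypothesis of Lemma~\ref{lem:transfer} is never met. Your attempted fix --- ``a random set and its image under a fixed bijection have the same distribution'' --- does not repair this, because the issue is not distributional but a hard cardinality obstruction. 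You then compound the confusion by writing $\sum_{(x,\xi)\notin\Sigma}=\sum_{x\notin S,\xi\in\Omega}$, which forces $\Sigma^c=S^c\times\Omega$ and hence $|\Sigma|\approx D/2$, contradicting your earlier identification of $\Sigma$ with the small RV set.

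The source of the problem is the ambient dimension. The statement has $\Omega\subset\hat G$ of average cardinality $|G|/2$ and $S\subset G$ with $|S|\le |G|/\log^5|G|$; so the relevant application of \eqref{eq:RV2} is in dimension $d=|G|$, not $|G|^2$. The paper's (admittedly terse) argument applies \eqref{eq:RV2} at the level of $G$ and needs the union bound for \emph{two} genuinely different random events attached to the single random set $\Omega\subset\hat G$ --- this is why $7$ becomes $14$. (Your two events, ``correct cardinality'' and ``RIP holds'', are already bundled together in the conclusion of Theorem~\ref{th:RV}, so they do not account for the doubling.) With $\Sigma^c=S^c\times\Omega$ one has $\tilde\Sigma^c=\Omega\times(-S)^c$, and both halves of the strong annihilating estimate for $(\Sigma,\tilde\Sigma)$ on $G\times\hat G$ involve $\Omega$ in one factor and $S^c$ in the other; verifying this estimate requires \eqref{eq:RV2} in $\C^G$ and its companion on $\C^{\hat G}$, each a separate probabilistic event for the same $\Omega$. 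Once that is in place, Lemma~\ref{lem:transfer} gives the conclusion with constant $2\sqrt{2}/\sqrt{\eta}$.
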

%

\section{Conclusion and future directions}

In this paper, we have shown how to obtain quantitative uncertainty principles for the
representation of a vector in two different bases. These estimates are stated in terms
of annihilating pairs and both extend and simplify previously known qualitative results.
We then apply our main theorem to the discrete short time Fourier transform,
following the path of corresponding results in the continuous setting.

\medskip

Let us now describe a question raised by our work. First note that we may rewrite \eqref{eq:RVstft}
\begin{equation}
\norm{f}_2\leq\frac{2\sqrt{2}}{\sqrt{\eta}}\frac{1}{|G|^{1/2}}\left(\sum_{x\notin S, \xi\in\Omega}|\scal{f,\pi(x,\xi)g}|^2\right)^{1/2}.
\label{eq:RVstft2}
\end{equation}
But, the family $\{\pi(x,\xi)g,\ x\in G,\ \xi\in\hat G\}$ forms a so-called
finite (tight) Gabor frame ({\it see} \cite{KPR} and references therein for more on finite
Gabor frames). In other words, we have a system of $|G||\hat G|=|G|^2:=d^2$ vectors
$e_1,\ldots,e_{d^2}$ in $\C^d$ such that, for every $f\in\C^d$
$$
\sum_{j=1}^{d^2}|\scal{f,e_j}|^2=d\norm{f}^2.
$$
If we write $\tilde\Omega$ for the subset of $\{1,\ldots,d^2\}$ such that
$\{e_j,j\in\tilde\Omega\}$ is an enumeration of  $\{\pi(x,\xi)g,\ x\notin S,\ \xi\in\Omega\}$, then
\eqref{eq:RVstft2} may be rewritten as
$$
\frac{\eta d}{8}\norm{f}^2\leq \sum_{j\in\tilde\Omega} |\scal{f,e_j}|^2
$$
As a consequence, we obtain that, if $\scal{f_1,e_j}=\scal{f_2,e_j}$ for every $j\in\tilde\Omega$, then applying this to $f=f_1-f_2$, we obtain that $f_1=f_2$. It would thus be desirable to have an algorithm that allows to
reconstruct $f$ from its frame coefficients $\{\scal{f,e_j},j\in\tilde\Omega\}$. In particular, we ask the following:

\begin{problem}
Let $\{e_1,\ldots,e_{d^2}\}$ be a Gabor frame in $\C^d$.
Assume that $\tilde\Omega\subset\{1,\ldots,d^2\}$ and $0<\delta<1$ are such that, for every $f\in\C^d$,
$$
\delta d\norm{f}^2\leq \sum_{j\in\tilde\Omega} |\scal{f,e_j}|^2.
$$
Is it true that every $f$ in $\C^d$ is given by
$$
f=\mathrm{argmin}\left\{\sum_{j=1}^{d^2} |\langle\tilde f,e_j\rangle|\,:\langle\tilde f,e_j\rangle=\scal{f,e_j}\ \forall j\in\tilde\Omega\right\}?
$$
\end{problem}

Note that no sparsity is assumed on $f$ here which make this problem differ from the one considered by
Pfander-Rauhut \cite{PR}. Also, we expect that there is a minimal
$\delta_0>0$ such that this property only holds for $\delta_0<\delta<1$.


A slightly different problem that may arise in radar theory is that of recovering $f$ \emph{and} $g$
from partial knowledge of $V_g^Gf$. It is totally unclear to us whether our results may contribute to this
task. In particular, note that this problem is quadratic (bilinear), so that an identity such as \eqref{eq:RVstft}-\eqref{eq:RVstft2} does not
immediately imply that if 
$V_g^Gf=V_{\tilde g}^G\tilde f$ on $S^c\times\Omega$, then $\tilde g=cg$ and $\tilde f=cf$
with $c\in\C$, $|c|=1$.

\subsection*{Ackowledgments}  Both authors were partially supported by the
ANR project AHPI {\it Analyse Harmonique et Probl\`emes Inverses}.

The authors are also particularly grateful to the anonymous referee who helped in clarifying the presentation of
the manuscript and pointed out a normalization issue in the UUP.


\begin{thebibliography}{99}

%


\bibitem{Al}
\textsc{N. Alon \& J.\,H. Spencer}, 
\newblock{\em The probabilist method. 2nd Edition.}
Wiley-Interscience Series in Discrete
Mathematics and Optimization. Wiley-Interscience, New York, 2000.

\bibitem{BT1}
\textsc{J. Bourgain \& L. Tzafriri}, 
\newblock{\em Invertibility of ``large'' submatrices and applications to the geometry of Banach spaces and Harmonic Analysis}.
Israel J. Math. {\bf 57} (1987), 137-224.
%


\bibitem{Ca}
\textsc{E. Cand\`es},
\newblock{\em The Restricted Isometry Property and Its Implications for Compressed Sensing.}
C. R. Acad. Sci. Paris S\'er. I Math. {\bf 346} (2008), 589--592.

\bibitem{CT1}
\textsc{E. Cand\`es \& T. Tao},
\newblock{\em Decoding by linear programming.}
IEEE Trans. Inform. Theory {\bf 51} (2005) 4203--4215.

\bibitem{CT2}
\textsc{E. Cand\`es \& T. Tao},
\newblock{\em Near-optimal signal recovery from random projections: universal encoding strategies.}
IEEE Trans. Inform. Theory {\bf 52} (2006), 5406--5425.

\bibitem{CRT}
\textsc{E. Cand\`es, J. Romberg \& T. Tao},
\newblock{\em Robust uncertainty principles: exact signal reconstruction from highly incomplete frequency information.}
IEEE Trans. Inform. Theory, {\bf 52} (2006) 489--509.

\bibitem{CR}
\textsc{E. Cand\`es \& J. Romberg},
\newblock{\em Quantitative robust uncertainty principles and optimally sparse 
decompositions.}
Found. of Comput. Math. {\bf 6} (2006), 227--254.

\bibitem{DV1}
\textsc{S. Delvaux \& M. Van Barel},
\newblock{\em Rank-deficient submatrices of Kronecker products of Fourier matrices.}
Linear Algebra Appl. {\bf 426} (2007), 349--367.

\bibitem{DV2}
\textsc{S. Delvaux \& M. Van Barel},
\newblock{\em Rank-deficient submatrices of Fourier matrices.}
Linear Algebra Appl. {\bf 429} (2008), 1587--1605.

\bibitem{De}
\textsc{B. Demange},
\newblock{\em Uncertainty principles for the ambiguity function.}
J. London Math. Soc. (2) {\bf 72} (2005), 717--730.

\bibitem{DH}
\textsc{D. L. Donoho \& X. Huo},
\newblock{\em Uncertainty principles and ideal atomic decomposition.}
IEEE Trans. Inform. Theory {\bf 47} (2001), 2845–-2862.

\bibitem{DS}
\textsc{D. L. Donoho \& P. B. Stark},
\newblock{\em Uncertainty principles and signal recovery.}
SIAM J. Appl. Math. {\bf 49} (1989), 906--931.

\bibitem{EB}
\textsc{M. Elad \& A.\,M. Bruckstein},
\newblock{\em A Generalized Uncertainty Principle and Sparse
Representation in Pairs of Bases.}
IEEE Trans. Inform. Theory {\bf 48} (2002), 2558--2567.

\bibitem{FS}
\textsc{G. B. Folland \& A. Sitaram},
\newblock{\em The uncertainty principle --- a mathematical survey.}
J. Fourier Anal. Appl. {\bf 3} (1997), 207--238.

\bibitem{FL}
\textsc{S. Foucart \& M. J. Lai},
\newblock{\em Sparsest Solutions of Underdetermined Linear Systems via $\ell_q$-minimization for $0\le q \le 1$.}
 Appl. Comput. Harmon. Anal. {\bf 26} (2009), 395--407.

\bibitem{GN}
\textsc{R. Gribonval \& M. Nielsen},
\newblock{\em Sparse representations in unions of base.}
IEEE Trans. Inform. Theory {\bf 49} (2003) 3320--3325.

\bibitem{GZ}
\textsc{K. Gr\"ochenig \& G. Zimmermann},
\newblock{\em Hardy's theorem and the short-time Fourier transform of Schwartz functions.}
J. London Math. Soc. (2) {\bf 63} (2001), 205--214.


\bibitem{HJ}
\textsc{V. Havin \& B. J\"oricke},
\newblock{\em The uncertainty principle in harmonic analysis.}
Springer-Verlag, Berlin, 1994.

\bibitem{HS}
\textsc{M. Herman \& T. Strohmer},
\newblock{\em High Resolution Radar via Compressed Sensing.}
IEEE Trans. Signal Processing, to appear.

\bibitem{HCM}
\textsc{S.\,D. Howard, A.\,R. Calderbank \& W. Moran},
\newblock{\em The Finite Heisenberg-Weyl Groups in Radar and Communication.}
EURASIP Journal on Applied Signal Processing 
{\bf 2006} (2006), Article ID 85685, 12 pages.

\bibitem{Jamem}
\textsc{Ph. Jaming},
\newblock{\em Inversibilit\'e Restreinte, Probl\`eme de Kadison-Singer et Applications \`a l'Analyse Harmonique -d'apr\`es J. Bourgain et L. Tzafriri- (sous la direction de M. Deschamps).}
Publications Math\'ematiques d'Orsay {\bf 94-24} (1994) ,71--154.

\bibitem{Jam}
\textsc{Ph. Jaming},
\newblock{\em Principe d'incertitude qualitatif et reconstruction de phase pour la transform\'ee de Wigner.}
C. R. Acad. Sci. Paris S\'er. I Math. {\bf 327} (1998), 249--254.

\bibitem{Janaz}
\textsc{Ph. Jaming}
\newblock{\em Nazarov's uncertainty principle in higher dimension.}
J. Approx. Theory {\bf 149} (2007), 611--630.

\bibitem{Jan}
\textsc{A.\,J.\,E.\,M. Janssen},
\newblock{\em Proof of a conjecture on the supports of Wigner distributions.}
J. Fourier Anal. Appl. {\bf 4} (1998), 723--726.

\bibitem{KPR}
\textsc{F. Krahmer, G.\,E. Pfander \& P. Rashkov},
\newblock{\em Uncertainty in time–frequency representations
on finite Abelian groups and applications.}
Appl. Compt. Harm. Anal. {\bf 25} (2008) 209--225.



%
%


\bibitem{MS}
\textsc{T. Matolcsi \& J. Szucs},
\newblock{\em Intersection des mesures spectrales conjug\'ees.}
C.R. Acad. Sci. Sér. I Math. {\bf 277} (1973), 841–-843.

\bibitem{Me}
\textsc{R. Meshulam},
\newblock{\em An uncertainty inequality for finite abelian groups.}
European J. Combin. {\bf 27} (2006), 63--67.

\bibitem{Na}
\textsc{F. L. Nazarov}
\newblock{\em Local estimates for exponential polynomials and their applications to inequalities of the 
uncertainty principle type. (Russian)}
Algebra i Analiz  {\bf 5}  (1993), 3--66;  translation in  St. Petersburg Math. J.  {\bf 5}  (1994), 
663--717.



\bibitem{PR}
\textsc{G. Pfander \& H. Rauhut}
\newblock{\em Sparsity in Time-Frequency Representations.}
J. Fourier Anal. Appl, to appear.

\bibitem{RV}
\textsc{M. Rudelson \& R. Vershynin},
\newblock{\em On sparse reconstruction from Fourier and Gaussian measurements}.
Comm. Pure and Appl. Math. {\bf 61} (2008), 1025--1045.


 

\bibitem{SS}
\textsc{D.\,A. Spielman \& N. Srivastava},
\newblock{\em An Elementary Proof of the Restricted Invertibility Theorem.}
Available at {\tt arXiv:0911.1114v3 [math.FA]}

\bibitem{Ta}
\textsc{T. Tao},
\newblock{\em An uncertainty principle for cyclic groups of prime order.}
Math. Res. Letters {\bf 12} (2005), 121--127.


\bibitem{Te}
\textsc{A. Terras},
\newblock{\em Fourier analysis on finite groups and application}. London Mathematical Society Student Texts, 
{\bf 43}. Cambridge University Press, Cambridge, 1999.

\bibitem{Tr0}
\textsc{J. A. Tropp},
\newblock{\em The random paving property for uniformly bounded matrices.}
Studia Math. {\bf 185} (2008), 67--82.

\bibitem{Tr}
\textsc{J. A. Tropp},
\newblock{\em On the linear independence of spikes and sines.}
J. Fourier Anal. Appl. {\bf 14} (2008), 838--858.
\end{thebibliography}
\end{document}